\documentclass[letterpaper, 12pt]{article}
\usepackage[letterpaper, margin=1 in]{geometry}
\pdfoutput=1
\usepackage{amsmath, amsthm, amssymb}
\usepackage{graphicx}
\usepackage{epstopdf}
\DeclareGraphicsRule{.tif}{png}{.png}{`convert #1 `dirname #1`/`basename #1 .tif`.png}

\usepackage[pdftex=true, bookmarks=true, bookmarksnumbered=true, backref=false, colorlinks=false, linkbordercolor={1 .75 .75},citebordercolor={.75 1 .75}, linktocpage=false]{hyperref}
\hypersetup{pdfauthor={Alex Bloemendal and B\'alint Vir\'ag},pdftitle={Limits of spiked random matrices I}}

\usepackage[longnamesfirst]{natbib}
\bibpunct[ ]{(}{)}{,}{a}{}{,}

\usepackage{setspace}
\onehalfspacing
\setstretch{1.19}

\usepackage{tocloft}

\setlength\cftbeforesecskip{0pt}
\setlength\cftsubsecindent{4 em}

\usepackage{enumerate}

\allowdisplaybreaks[3]

\newcommand{\e}{\varepsilon}
\newcommand{\ph}{\varphi}

\newcommand{\del}{\partial}

\newcommand{\Z}{\mathbb Z}

\newcommand{\R}{\mathbb R}
\newcommand{\C}{\mathbb C}

\newcommand{\F}{\mathcal F}
\newcommand{\Hc}{\mathcal H}

\newcommand{\Pc}{\mathcal P}

\newcommand{\half}{\tfrac{1}{2}}
\newcommand{\abs}[1]{\left\lvert#1\right\rvert}

\newcommand{\norm}[1]{\left\lVert#1\right\rVert}
\newcommand{\Norm}[1]{\bigl\lVert#1\bigr\rVert}
\newcommand{\ip}[1]{\left\langle#1\right\rangle}
\newcommand{\Ip}[1]{\bigl\langle#1\bigr\rangle}

\DeclareMathOperator{\diag}{diag}

\renewcommand{\P}{\operatorname{\mathbf{P}}}
\newcommand{\1}{\mathbf{1}}
\DeclareMathOperator{\E}{\mathbf E}

\newcommand{\Hyw}[1]{\Hc_{y,w}\!\left(#1\right)}
\newcommand{\wt}{\widetilde}
\newcommand{\ol}{\overline}

\DeclareMathOperator{\Ai}{Ai}

\newcommand{\trb}{\tfrac{2}{\sqrt\beta}}

\newcommand{\weight}{{\textstyle\sqrt{1+\ol\eta}}}

\theoremstyle{plain}
\newtheorem{theorem}{Theorem}[section]
\newtheorem{lemma}[theorem]{Lemma}
\newtheorem{corollary}[theorem]{Corollary}
\newtheorem{proposition}[theorem]{Proposition}

\newtheorem{fact}[theorem]{Fact}
\theoremstyle{definition}
\newtheorem{definition}[theorem]{Definition}
\theoremstyle{remark}
\newtheorem{remark}[theorem]{Remark}

\numberwithin{equation}{section}
\setcounter{secnumdepth}{1}

\defcitealias{BBP}{BBP}
\defcitealias{RRV}{RRV}

\title{Limits of spiked random matrices I}
\author{Alex Bloemendal\and B\'alint Vir\'ag}
\date{September 16, 2011}
\begin{document}

\maketitle

\begin{abstract} Given a large, high-dimensional sample from a spiked population, the top sample covariance eigenvalue is known to exhibit a phase transition.  We show that the largest eigenvalues have asymptotic distributions near the phase transition in the rank one spiked real Wishart setting and its general $\beta$ analogue, proving a conjecture of Baik, Ben Arous and P\'ech\'e (2005).  We also treat shifted mean Gaussian orthogonal and $\beta$ ensembles. Such results are entirely new in the real case; in the complex case we strengthen existing results by providing optimal scaling assumptions.  One obtains the known limiting random Schr\"odinger operator on the half-line, but the boundary condition now depends on the perturbation.  We derive several characterizations of the limit laws in which $\beta$ appears as a parameter, including a simple linear boundary value problem.  This PDE description recovers known explicit formulas at $\beta=2,4$, yielding in particular a new and simple proof of the Painlev\'e representations for these Tracy-Widom distributions.\end{abstract}

\bigskip\bigskip\bigskip
\setcounter{tocdepth}{1}
\tableofcontents
\thispagestyle{empty}

\pagebreak
\section{Introduction}

The study of sample covariance matrices is the oldest random matrix theory, predating Wigner's introduction of the Gaussian ensembles into physics by nearly three decades. Given a sample $X_1,\dots, X_n\in\R^p$ drawn from a large, centred population, form the $p\times n$ data matrix $X = [X_1\dots X_n]$; the $p\times p$ matrix $S = X X^\dag$ plays a central role in multivariate statistical analysis \citep{Muirhead, Bai, Anderson2}.  The distribution in the i.i.d.\ Gaussian case is named after Wishart who computed the density in 1928. The classical story is that of the consistency of the \textbf{sample covariance matrix} $\frac{1}{n}S$ as an estimator of the \textbf{population covariance matrix} $\Sigma = \E X_i X_i^\dag$ when the dimension $p$ is fixed and the sample size $n$ becomes large. The law of large numbers already gives $\frac{1}{n}S\to\Sigma$.  In this fixed dimensional setting, the eigenvalues $\lambda_1\ge\dots\ge\lambda_p$ of $S$ produce consistent estimators of the eigenvalues $\ell_1\ge\dots\ge\ell_p$ of $\Sigma$: for example, the \textbf{sample eigenvalue} $\frac{1}{n}\lambda_k$ tends almost surely to the \textbf{population eigenvalue} $\ell_k$ as $n\to\infty$, with Gaussian fluctuations on the order $n^{-1/2}$ \citep{Anderson}. The same holds in the complex case $X_i\in\C^p$.

Contemporary problems typically involve \textbf{high dimensional data}, meaning that $p$ is large as well---perhaps on the same order as $n$ or even larger.  In this setting, say with \textbf{null covariance} $\Sigma = I$, the sample eigenvalues may no longer concentrate around the population eigenvalue 1 but rather spread out over a certain compact interval. If $p/n\to c$ with $0<c\le 1$, \citet{MP} proved that a.s.\ the empirical spectral distribution $\frac{1}{p}\sum_k\delta_{\lambda_k/n}$ converges weakly to the continuous distribution with density
\[
\frac{\sqrt{(b-x)(x-a)}}{2\pi c x}\1_{[a,b]}(x)
\]
where $a = (1-\sqrt{c})^2$ and $b = (1+\sqrt{c})^2$. (The singular case $c>1$ is similar by the obvious duality between $n$ and $p$, except that the $p-n$ zero eigenvalues become an atom at zero of mass $1-c^{-1}$.) This \textbf{Mar\v{c}enko-Pastur law} is the analogue of Wigner's semicircle law in this setting of multiplicative rather than additive symmetrization \citep[see also][]{SB}. The assumption of Gaussian entries may be significantly relaxed.

Often one is primarily interested in the \emph{largest} eigenvalues, as for example in the widely practiced statistical method of principal components analysis.  Here the goal is a good low-dimensional projection of a high-dimensional data set, i.e.\ one that captures most of the variance; the structure of the significant trends and correlations is estimated using the largest sample eigenvalues and their eigenvectors. The challenge is to determine which observed eigenvalues actually represent structure in the population, and understanding the behaviour in the null case is therefore an essential first step.

In the null case the first-order behaviour is simple: $\frac{1}{n}\lambda_k\to b$ a.s.\ for each fixed $k$ as $n\to\infty$, i.e.\ none have limits beyond the edge of the support of the limiting spectral distribution \citep{Geman, YBK}.  More interestingly, the fluctuations are no longer asymptotically Gaussian but are rather those now recognized as universal at a real symmetric or Hermitian \textbf{random matrix soft edge}: they are on the order $n^{-2/3}$, asymptotically distributed according to the appropriate \textbf{Tracy-Widom law}.   The latter were introduced by \citet{TW1, TW2} as limiting largest eigenvalue distributions for the Gaussian ensembles \citep[see also][]{F1} and have since been found to occur in diverse probabilistic models. The limit theorems for sample covariance matrices were proved by \citet{Jo} in the complex case and by \citet{J1} in the real case \citep[see][for the first universality results here]{Soshnikov}. Restrictions $c\neq 0,\infty$ on the limiting dimensional ratio were removed by \citet{EK1} \citep[see also][]{P2}.
%A very different approach taken in \citep{RRV} works in the so-called general $\beta$ setting---which includes both the real and complex Wishart cases---and further allows $n,p\to\infty$ abitrarily; we shall return to it shortly.
%(The limiting spectral density dropping off like $\sqrt{b-x}$ is the characteristic feature of a soft edge. An example of a hard edge is the bottom edge of the Mar\v{c}enko-Pastur law in the case $c = 1$, which has a very different character.)

Motivated by principal components analysis, it is natural to study the behaviour of the largest sample eigenvalues when the population covariance is not null but rather has a few trends or correlations.  \citet{J1} proposed the \textbf{spiked population model} in which all but a fixed finite number of population eigenvalues (the \textbf{spikes}) are taken to be 1 as $n,p$ become large.  \citet*{BBP} (\textbf{BBP}) analyzed the spiked \emph{complex} Wishart model and discovered a very interesting phenomenon:  a phase transition in the asymptotic behaviour of the largest sample eigenvalue as a function of the spikes.  We restrict attention to the case of a single spike in the present chapter, setting $\ell_1 = \ell$, $\ell_2 = \ell_3 = \cdots = 1$.

In this \textbf{rank one perturbed case}, \citetalias{BBP} describe three distinct regimes. Assume that $p/n=\gamma^2$ is compactly contained in $(0,1]$. If $\ell_{n,p}$ is in compactly contained in $(0,1+\gamma)$ then the behaviour of the top eigenvalue is exactly the same as in the null case:
\[
\P\left(\tfrac{\gamma^{-1}}{\left(1+\gamma^{-1}\right)^{4/3}} n^{2/3}\left(\tfrac{1}{n}\lambda_1 - \left(1+\gamma\right)^2\right) \le x\right) \to F_2(x),
\]
where $F_2$ is the Tracy-Widom law for the top GUE eigenvalue. This is the \textbf{subcritical regime}. If $\ell_{n,p}$ is compactly contained in $(1+\gamma,\infty)$ then the top eigenvalue separates from the bulk and has Gaussian fluctuations on the order $n^{-1/2}$:
\[
\P\left(\left(\ell^2-\gamma^2\tfrac{\ell^2}{(\ell-1)^2}\right)^{-1/2}n^{1/2}\left(\tfrac{1}{n}\lambda_1 - \left(\ell + \gamma^2\tfrac{\ell}{(\ell-1)}\right)\right)  \le x\right) \to \frac{1}{\sqrt{2\pi}}\int_{-\infty}^x e^{-t^2/2}\,dt.
\]
This is the \textbf{supercritical regime}.  Finally there is a one-parameter family of \textbf{critical} scalings in which $\ell_{n,p} - (1+\gamma)$ is on the order $n^{-1/3}$; these double scaling limits  are tuned so that the fluctuations---which are on the order $n^{-2/3}$ as in the subcritical case---are asymptotically given by a certain one-parameter family of deformations of $F_2$. We refer the reader to the original work for details.  Subsequent work includes a treatment of the singular case $p>n$ along the same lines \citep{Onatski}, deeper investigations into the limiting kernels \citep{DF}, and generalizations beyond the spiked model \citep{EK2} and away from Gaussianity \citep{BY,FP2}. \citetalias{BBP} conjectured a similar phase transition for spiked \emph{real} Wishart matrices, in the sense that all scalings should be the same but the limiting distributions would be different.

Now often referred to as the BBP transition, this picture is relevant in various applications. Within mathematics it has been applied to the TASEP model of interacting particles on the line \citep{BC}. Spiked complex Wishart matrices occur in problems in wireless communications \citep{Telatar}. With these two exceptions, however, most applications involve data that are \emph{real} rather than complex. They include economics and finance---\citet{Harding} used the phase transition to explain an old standard example of the failure of PCA---and medical and population genetics---\citet{Patterson} discuss its role in attempting to answer such questions as ``Given genotype data, is it from a homogeneous population?'' Further applications include speech recognition, statistical learning and the physics of mixtures \citep[see][for references]{J2, Paul, FP2}.  In general, asymptotic distributions in the non-null cases are relevant when evaluating the power of a statistical test \citep{J2}.
%Full picture in complex case \citep{BBP} gives complete multi-dimensional phase diagram and all limiting distributions in the critically spiked regime around the phase transition.  Generalization to unbounded rank perturbations in \citep{EK2}.   

Despite these developments, the conjectured BBP picture for spiked real Wishart matrices has proven elusive even in the rank one case. The difficulty is with the joint eigenvalue density: The complex case involves an integral over the unitary group that \citetalias{BBP} analyzed via the Harish-Chandra-Itzykson-Zuber integral, a tool originating in representation theory that appears to have no straightforward analogue over the orthogonal group. Much is known, however. At the level of a law of large numbers, the phase transition is described by \citet{BS}; a related separation phenomenon was observed already by \citet{BaiS1,BaiS2}. A broad generalization of the results on a.s.\ limits is developed by \citet{BN} and dubbed ``spiked free probability theory".  \cite{Paul, BY} prove Gaussian central limit theorems in the supercritical regime. \citet{FP2} prove Tracy-Widom fluctuations in the subcritical regime under the scaling assumptions of \citetalias{BBP}.   Interestingly, \cite{W} obtained a critical limiting distribution for certain rank one spiked \emph{quaternion} Wishart matrices.

It remains to obtain the asymptotic behaviour in the critically spiked regime around the phase transition in the real case.  We do so here, establishing the existence of limiting distributions under the scalings conjectured by \citetalias{BBP} and characterizing the laws. Our results apply also to the complex case, and they are more general than the corresponding statements from \citetalias{BBP}. We do not restrict the scaling of $n, p$ beyond requiring that they tend to infinity together, nor that of $\ell$ beyond what is strictly necessary for the existence of a limiting distribution in the subcritical or critical regimes.  We therefore allow for certain relevant possibilities that were previously excluded, namely $p\ll n$ and $p\gg n$. The picture of the dependence on the spike is also more complete: we include all intermediate scalings of $\ell$ with $n,p$ across the subcritical and critical regimes. Separately, we describe a joint convergence in law when the same underlying data is spiked with different $\ell$.

Since this article was first posted, \citet{Mo} gave a different treatment of the real rank one case.  Despite the difficulties mentioned, he succeeds with the standard program of obtaining forms for the joint eigenvalue and largest eigenvalue distributions and doing asymptotic analysis on the latter. His description of the limiting distribution naturally looks very different from ours.  See \cite{F3} for some remarks on the two treatments and an alternative construction of the ``general $\beta$'' model we now introduce.

We bypass the eigenvalue density altogether; our starting point is rather a \emph{reduction of the matrix to tridiagonal form via Householder's algorithm}, a well-known tool in numerical analysis. \cite{Trotter} observed that the algorithm interacts nicely with the Gaussian structure, using the resulting forms to derive the Wigner semicircle and Mar\v{c}enko-Pastur laws without going through their moments.  Observing the similarity of the forms in the $\beta=1,2,4$ cases, \citet{DE} introduced interpolating matrix ensembles for all $\beta>0$ whose eigenvalue density is given by Dyson's \textbf{Coulomb} or \textbf{log gas model}
\begin{equation}\label{loggas}
\frac{1}{Z}\prod_{j< k}\abs{\lambda_j-\lambda_k}^\beta\prod_j v(\lambda_j)^{\beta/2}
\end{equation}
where $v$ is the Hermite or the Laguerre weight and $Z$ is a normalizing factor \citep[see][for more on such models]{F2}.  Incidentally, Trotter's argument applies to these \textbf{general $\beta$ analogues} and establishes Wigner semicircle and Mar\v{c}enko-Pastur laws in this setting.  An extension to more general weights is part of a forthcoming work of \citet{KRV}.

The second step is to consider the tridiagonal ensemble as a discrete random Schr\"odinger operator (i.e.\ discrete Laplacian plus random potential) and then take a scaling limit at the soft edge to obtain a certain continuum random Schr\"odinger operator on the half-line.  This ``stochastic operator approach to random matrix theory'' was pioneered by \citet{ES}, \citet{S}; in the soft edge case their heuristics were proved by \citet{RRV}, who in particular established joint convergence of the largest eigenvalues. Our method is directly based on the latter work and we refer to it throughout by the initials \textbf{RRV}.  The key point is that both steps can be adapted to the setting of rank one perturbations.  As we will see, the limiting operator feels the perturbation in the boundary condition at the origin.
%(For the hard edge, see \citep{RR}; for the bulk, see \citep{VV, KS}.)

In detail, let $X$ be a $p\times n$ sample matrix whose columns are independent real $N(0,\Sigma)$ with $\Sigma = \diag\bigl(\ell,1,\dots,1)$ for some $\ell>0$; we shall say $S = XX^\dag$ has the \textbf{$\ell$-spiked $p$-variate real Wishart distribution with $n$ degrees of freedom}. (There is no loss of generality in taking $\Sigma$ diagonal in the Gaussian case.) We also consider the complex and quaternion cases. The tridiagonalization is carried out in detail in Section~\ref{s.3}. The result is a symmetric tridiagonal $(n\wedge p)\times(n\wedge p)$ matrix $W^\dag W$, where $W$ is a certain bidiagonal matrix with the same nonzero singular values as $X$. Explicitly, $W$ is given by
\begin{equation}\label{Wb}
W^{\beta,\ell}_{n, p} = \frac{1}{\sqrt\beta}\begin{bmatrix}
\sqrt{\ell}\,\wt\chi_{\beta n}
\\ \chi_{\beta(p-1)} & \wt\chi_{\beta(n-1)}
\\ & \chi_{\beta(p-2)} & \wt\chi_{\beta(n-2)}
\\ & & \ddots & \ddots
\\ & & & \chi_{\beta(p-(n\wedge p)+1)} & \wt\chi_{\beta(n-(n\wedge p)+1)}
\\ & & & & \chi_{\beta(p-(n\wedge p))}
\end{bmatrix}
\end{equation}
where $\beta = 1,2,4$ in the real, complex and quaternion cases respectively and the $\chi,\wt\chi$'s are mutually independent chi distributed random variables with parameters given by their indices. In fact~\eqref{Wb} makes sense for any $\beta>0$, and the resulting ensemble $W^\dag W$ is a ``spiked version'' of the $\beta$-Laguerre ensemble of \citet{DE}; we call it the \textbf{$\ell$-spiked $\beta$-Laguerre ensemble with parameters $n,p$}. Such a matrix almost surely has exactly $n\wedge p$ distinct nonzero eigenvalues by the theory of Jacobi matrices. In the null case $\ell=1$, their joint density is~\eqref{loggas} with the Laguerre weight $v(x)=x^{\abs{n-p}+1-2/\beta}e^{-x}\1_{x> 0}$. We note that there is an obvious coupling of~\eqref{Wb} over all $\ell>0$; in the spiked Wishart cases it corresponds to the natural coupling obtained by considering $X$ as a matrix of standard Gaussians left multiplied by $\sqrt{\Sigma}$.

In order to state our results, we now recall the \textbf{stochastic Airy operator} introduced by \citet{ES}. Formally this is the random Schr\"odinger operator
\[
\Hc_\beta = -\frac{d^2}{dx^2} + x + \trb b_x'
\]
acting on $L^2(\R_+)$ where $b_x'$ is standard Gaussian white noise. \citetalias{RRV} defined this operator rigorously and considered the eigenvalue problem $\Hc_\beta f = \Lambda f$ with Dirichlet boundary condition $f(0) = 0$. We will consider a general homogeneous boundary condition $f'(0) = wf(0)$, a Neumann or Robin condition for $w\in(-\infty,\infty)$ with the limiting Dirichlet case naturally corresponding to $w=+\infty$. Precise definitions will be given in Section~\ref{s.2} in a more general setting; for now, we write $\Hc_{\beta,w}$ to indicate the stochastic Airy operator together with this boundary condition.

We will see that, almost surely, $\Hc_{\beta,w}$ is bounded below with purely discrete, simple spectrum $\{\Lambda_0<\Lambda_1<\cdots\}$ for all $w\in(-\infty,\infty]$. This fact will be established simultaneously with the standard variational characterization: in Proposition~\ref{p.var}, we show in particular that $\Lambda_k$ and the corresponding eigenfunction $f_k$ are given recursively by
\begin{equation}\label{SAvar}
\Lambda_k \,=\, \inf_{\substack{f\in L^2,\ \norm{f}=1,\\ f\perp f_0,\dots, f_{k-1}}} \int_0^\infty \bigl({f'(x)}^2 + xf^2(x)\bigr)dx + wf(0)^2 + \trb\int_0^\infty f^2(x)\,db_x
\end{equation}
in which we consider only candidates $f$ for which the first integral is finite, and the stochastic integral is defined pathwise via integration by parts. Recall from \citetalias{RRV} that the distribution $F_{\beta,\infty}$ of $-\Lambda_0$ in the Dirichlet case $w = +\infty$ may be taken as a definition of \textbf{Tracy-Widom($\beta$)} for general $\beta>0$, a one-parameter family of distributions interpolating between those at the standard values $\beta= 1,2,4$.  Fixing $\beta$, the distributions $F_{\beta,w}$ for finite $w$ may be thought of as a family of deformations of Tracy-Widom($\beta$).  We note that the pathwise dependence of $\Hc_{\beta,w}$ on the Brownian motion allows the operators to be coupled over $w$ in a natural way.

Our first result gives a convergence in distribution at the soft edge of the $\ell$-spiked $\beta$-Laguerre spectrum over the full range of subcritical and critical scalings. Note the absence of extraneous hypotheses on $n$, $p$ and $\ell_{n,p}$.

\begin{theorem}\label{t.W} Let $\ell_{n,p}>0$. Let $S = S_{n,p}$ have the real (resp.\ complex, quaternion) $\ell_{n,p}$-spiked $p$-variate Wishart distribution with $n$ degrees of freedom and set $\beta = 1$ (resp.\ $2$, $4$), or, let $\beta >0$ and take $S_{n,p}$ from the $\ell_{n,p}$-spiked $\beta$-Laguerre ensemble with parameters $n,p$.  Writing $m_{n,p} = \left(n^{-1/2} + p^{-1/2}\right)^{-2/3}$, suppose that
\begin{equation}\label{Wspike}
m_{n,p}\left(1-\sqrt{n/p}\bigl(\ell_{n,p}-1\bigr)\right)\,\to\,w\in(-\infty,\infty]\qquad\text{as }n\wedge p\to\infty.
\end{equation}
Let $\lambda_1>\dots>\lambda_{n\wedge p}$ be the nonzero eigenvalues of $S$. Then, jointly for $k=1,2,\ldots$ in the sense of finite-dimensional distributions, we have
\[
\frac{m_{n,p}^2}{\sqrt{np}}\left(\lambda_k - \left(\sqrt{n}+\sqrt{p}\right)^2\right)\,\Rightarrow\, -\Lambda_{k-1}\qquad\text{as }n\wedge p\to\infty
\]
where $\Lambda_0<\Lambda_1<\cdots$ are the eigenvalues of $\Hc_{\beta,w}$. Furthermore, the convergence holds jointly with respect to the natural couplings over all $\{\ell_{n,p}\},w$ satisfying~\eqref{Wspike}.
\end{theorem}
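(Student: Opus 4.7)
My plan is to follow the stochastic-operator strategy of \citetalias{RRV}. By the tridiagonalization (1.3), the nonzero eigenvalues of $S$ are exactly those of the symmetric tridiagonal matrix $W^\dag W$ with explicit chi-distributed entries. I would first rewrite the eigenvalue equation $W^\dag W v = \lambda v$ as a three-term recurrence and then apply the edge rescaling $x = i/m_{n,p}$ together with $\lambda = (\sqrt n+\sqrt p)^2 + \sqrt{np}\,\mu/m_{n,p}^2$. After a Taylor expansion of $\chi_{\beta k}$ around its mean and a martingale CLT for the centered fluctuations of $\chi_{\beta k}^2$, the bulk part of the rescaled operator should converge, in the sense of \citetalias{RRV}, to the stochastic Airy operator $-d^2/dx^2 + x + \trb b_x'$ on $\R_+$.

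The new ingredient is the spike, which modifies only the $(1,1)$ and $(1,2)$ entries of $W^\dag W$ and hence only the boundary behaviour of the limit operator. At the level of means, the spike adds approximately $(\ell-1)n$ to the $(1,1)$ entry and $(\sqrt\ell-1)\sqrt{np}$ to the $(1,2)$ entry. Writing out the first equation of the recurrence and regrouping as a discrete first derivative at the endpoint, one finds that $v_1$ and $v_2$ satisfy, to leading order, a relation of the form $v_2 - v_1 \approx \bigl[\sqrt{n/p}(\ell-1)\bigr] v_1$. By the scaling hypothesis (1.4) the bracketed factor equals $1 - w/m_{n,p} + o(m_{n,p}^{-1})$, so dividing by the mesh $1/m_{n,p}$ produces exactly the discretization of the Robin condition $f'(0) = wf(0)$ at $x = 0$. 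The null Dirichlet case $v_0 = 0$ from \citetalias{RRV} is recovered as $w = +\infty$, and the parametrization in (1.4) is chosen precisely so that $w$ indexes the limiting boundary condition.

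I would then invoke a general operator-convergence theorem, to be set up in Section~\ref{s.2} in the spirit of \citetalias{RRV}: if discrete tridiagonal operators on $\{1/m_{n,p},\,2/m_{n,p},\dots\}$ are built from potentials whose integrated centered fluctuations converge in $C[0,\infty)$ to $\trb$ times a Brownian motion, and from top-row perturbations whose rescaled strength converges to $w$, then the bottom eigenvalues and eigenfunctions converge jointly in distribution to those of $\Hc_{\beta,w}$. The upper bound on the limiting eigenvalues uses the variational formula (1.5) from Proposition~\ref{p.var} together with convergence of discrete quadratic forms to the continuum one; the lower bound requires tightness of the low-eigenfunction optimizers, which follows from Riccati/oscillation bounds adapted directly from \citetalias{RRV}. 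The joint convergence over $\{\ell_{n,p}\}$ asserted in the theorem then falls out, since the discrete side is coupled through a single realization of the chi variables and the limiting operators $\Hc_{\beta,w}$ can be coupled through a common Brownian motion with continuous dependence on $w$.

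The principal obstacle is getting the boundary identification right. The spike is \emph{large}: after edge scaling, the bare $(1,1)$ excess $(\ell-1)n$ blows up like $m_{n,p}^2$ rather than remaining $O(1)$, and would completely overwhelm the $O(1)$ Airy potential if treated as an on-site perturbation. Only after combining it with the compensating $(1,2)$ excess and reinterpreting the first equation of the recurrence as a discrete Robin problem does this divergence cancel, leaving a finite limiting parameter $w$. Making this cancellation uniform in the strong sense required for operator convergence---while still tracking the noise down to the endpoint---is the main technical task. A secondary difficulty is ensuring the argument is uniform across all regimes of $n,p$, including $p \ll n$ and $p \gg n$, which is the reason for the choice $m_{n,p} = (n^{-1/2}+p^{-1/2})^{-2/3}$ rather than a ratio-dependent scale.
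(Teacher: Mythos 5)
Your overall architecture coincides with the paper's: tridiagonalize to $W^\dag W$ with $W$ as in \eqref{Wb}, rescale at the soft edge with $m_{n,p}$, and invoke a general spiked tridiagonal-to-continuum convergence theorem (the paper's Theorem~\ref{t.conv}, whose Assumptions 1--3 play exactly the role of your hypotheses, verified in Section~\ref{s.3}). But the one genuinely new step --- how the spike becomes the Robin parameter --- is misidentified, and as written it would fail. First, the spike does not touch the $(1,2)$ entry of $W^\dag W$: that entry is $\chi_{\beta(p-1)}\wt\chi_{\beta(n-1)}/\beta$, free of $\ell$; it is the other product $WW^\dag$ whose $(1,1)$ and $(1,2)$ entries both carry $\ell$, and the paper works with $W^\dag W$ precisely so that the perturbation is purely diagonal and sits only in the corner. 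Second, your boundary relation carries the wrong factor: writing $v_1,v_2$ for the first two coordinates, the first row of the scaled eigenvalue equation gives to leading order $m(v_2-v_1)\approx\bigl[1-\sqrt{n/p}\,(\ell-1)\bigr]\,m\,v_1\approx w\,v_1$, i.e.\ the bracket is the \emph{complement} of yours. With your bracket $\sqrt{n/p}\,(\ell-1)\to1$ one gets $f'(0)\approx(m-w)f(0)$, which forces the Dirichlet condition in the limit for every $w$ and would only reproduce the null law.

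Consequently the ``principal obstacle'' you describe --- a delicate cancellation between divergent $(1,1)$ and $(1,2)$ excesses --- is not the actual mechanism and need not be engineered. In the $W^\dag W$ frame the order-$m^2$ corner excess is absorbed by the free discrete Laplacian: $D_n^\dag D_n$ in \eqref{Hn} has corner entry $m_n^2$ rather than the bulk value $2m_n^2$, so after subtracting it the corner carries only the $O(m_n)$ term $w_nm_n$ of \eqref{Hn1}, a rank-one ``delta at the origin'' $w_nE_n$. The null case corresponds to $w_n\approx m_n\to+\infty$ (Dirichlet), and hypothesis \eqref{Wspike} is precisely the statement that $w_n\to w$: in the paper this is Assumption 3, verified by a short variance computation on $\wt\chi^2_{\beta n}$ and $\chi^2_{\beta(p-1)}$ showing $w_n-\ol w_n\to0$ in probability, while the boundary form term is controlled uniformly by $v_0^2\le 2\norm{D_nv}\norm{v}$ (Lemma~\ref{l.tight}); no uniform cancellation estimate tracking the noise at the endpoint is required. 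A minor further point: tightness of the low eigenvectors comes from these $L^*_n$ form bounds, not from Riccati/oscillation estimates, which enter only in the characterization of the limit laws in Section~\ref{s.4}.
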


\begin{remark}
In the tridiagonal basis, the convergence holds also at the level of the corresponding eigenvectors. If the eigenvector corresponding to $\lambda_k$ is embedded in $L^2(\R_+)$ as a step-function with step width $m_{n,p}^{-1}$ and support $[0,({n\wedge p})/m_{n,p}]$, then it converges to $f_{k-1}$ in distribution with respect to the $L^2$ norm; the details are the subject of the next section. In particular, distributional convergence of the rescaled tridiagonal operators to $\Hc_{\beta,w}$ holds in the norm resolvent sense \citep[see e.g.][]{Weidmann}.  Defining $\Hc_{\beta,w}$ as a closed operator on the appropriate (random) dense subspace of $L^2$ requires some care, however \citep[see e.g.][]{SS} and we shall not pursue it here.
\end{remark}

\begin{remark}
The supercritical regime $w = -\infty$ sees a macroscopic separation of the largest eigenvalue from the bulk of the spectrum; the fluctuations of $\lambda_1$ are on a larger order and they are asymptotically Gaussian, independent of the rest.  Though known for real and complex spiked sample covariance matrices \citep[BBP,][]{Paul, BY}, existing results do not cover intermediate ``vanishingly supercritical'' scalings of $\ell$ with $n,p$ and thus leave a certain gap between the critical and supercritical regimes. This gap can be addressed using the stochastic Airy framework \citep{Super}.
\end{remark}

\begin{remark}
Work of \citet{FP2} immediately allows extension of the previous theorem in the real and complex spiked Wishart cases to more general real and complex spiked sample covariance matrices. More precisely, the i.i.d.\ multivariate Gaussian columns of the data matrix $X$ may be replaced with i.i.d.\ columns having zero mean and rank one spiked diagonal covariance, and satisfying some moment conditions. These authors make the same assumptions on the dimension ratio as \citetalias{BBP}, but the null case universality result of \citet{P2} suggest these could be removed.
\end{remark}

We prove Theorem~\ref{t.W} by establishing a more general technical result, Theorem~\ref{t.conv} in Section~\ref{s.2}. The latter theorem gives conditions under which the low-lying eigenvalues and corresponding eigenvectors of a large random symmetric tridiagonal matrix converge in law to those of a random Schr\"odinger operator on the half-line with a given potential and homogeneous boundary condition at the origin. Verifying the hypotheses for suitably scaled spiked Laguerre matrices will be relatively straightforward; we do it in Section~\ref{s.3}. The approach follows that of \citetalias{RRV}, where the null case of Theorem~\ref{t.W} is treated.

One advantage of such an approach is that it immediately yields results for other matrix models as well.  In particular, finite-rank additive perturbations of \textbf{Gaussian orthogonal, unitary and symplectic ensembles (GO/U/SE)} have received considerable attention. The analogue of the BBP theorem in the perturbed GUE setting was established by \citet{P1, DF}. \citet{BFF} treat an interesting generalization and mention some applications to physics.  We consider a simple additive rank one perturbation of the GOE obtained by shifting the mean of every entry by the same constant $\mu/\sqrt{n}$. By orthogonal invariance, this has the same effect on the spectrum as shifting the (1,1) entry by $\sqrt{n}\,\mu$. With this perturbation, the usual tridiagonalization procedure works; the resulting form is the $\beta=1$ case of
\begin{equation}\label{Gb}
G^{\beta,\mu}_{n} = \frac{1}{\sqrt\beta}\begin{bmatrix}
\sqrt{2}\,g_1 + \sqrt{\beta n}\,\mu & \chi_{\beta(n-1)}
\\ \chi_{\beta(n-1)} &  \sqrt{2}\,g_2 & \chi_{\beta(n-2)}
\\ & \chi_{\beta(n-2)} & \sqrt{2}\,g_3 & \ddots
\\ & & \ddots & \ddots & \chi_\beta
\\ & & & \chi_{\beta} &  \sqrt{2}\,g_n
\end{bmatrix},
\end{equation}
where the $ g$'s are independent standard Gaussians and the $\chi$'s are independent Chi random variables indexed by their parameter as before. The analogous procedure for a shifted mean GUE (resp.\ GSE) yields~\eqref{Gb} with $\beta =2$ (resp.\ 4). This matrix ensemble is a perturbed version of the \textbf{$\beta$-Hermite ensemble} of \citet{DE}. In the unperturbed case $\mu=0$, the joint eigenvalue density is~\eqref{loggas} with the Hermite weight $v(x)=e^{-x^2/2}$. Again, the models are naturally coupled over all $\mu\in\R$.

As in the spiked real Wishart setting, the critical regime for the rank one perturbed GOE has resisted description. We show that the phase transition in the perturbed Hermite ensemble has the same characterization as the one in the Laguerre ensemble.

\begin{theorem}\label{t.G} Let $\mu_n\in\R$. Let $G = G_n$ be a $(\mu_n/\sqrt{n})$-shifted mean $n\times n$ GOE (resp.\ GUE, GSE) matrix and set $\beta = 1$ (resp.\ $2$, $4$), or, let $\beta >0$ and take $G_n = G_n^{\beta,\mu_n}$ as in $\eqref{Gb}$. Suppose that
\begin{equation}\label{Gspike}
n^{1/3}\left(1-\mu_n\right)\,\to\,w\in(-\infty,\infty]\qquad\text{as }n\to\infty.
\end{equation}
Let $\lambda_1>\dots>\lambda_n$ be the eigenvalues of $G$. Then, jointly for $k=0,1,\ldots$ in the sense of finite-dimensional distributions, we have
\[
n^{1/6}\left(\lambda_k - 2\sqrt{n}\right)\,\Rightarrow\, -\Lambda_{k-1}\qquad\text{as }n\to\infty
\]
where $\Lambda_0<\Lambda_1<\cdots$ are the eigenvalues of $\Hc_{\beta,w}$. Furthermore, the convergence holds jointly with respect to the natural couplings over all $\{\mu_n\},w$ satisfying~\eqref{Gspike}.
\end{theorem}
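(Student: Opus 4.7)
The plan is to derive Theorem~\ref{t.G} by applying the general tridiagonal-to-Schr\"odinger convergence criterion Theorem~\ref{t.conv} of Section~\ref{s.2} to the matrix $G^{\beta,\mu_n}_n$ of~\eqref{Gb}. It suffices to treat the general $\beta$-Hermite case, since the GOE/GUE/GSE cases reduce to it via the standard Householder tridiagonalization, using the observation recorded before the theorem statement that by orthogonal invariance the mean shift has the same spectral effect as shifting only the $(1,1)$ entry by $\sqrt{n}\,\mu_n$.

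I would work with the rescaled tridiagonal matrix $M_n := n^{1/6}\bigl(G^{\beta,\mu_n}_n - 2\sqrt{n}\,I\bigr)$ at discretization step width $m = n^{1/3}$, so that under the embedding $i\leftrightarrow x = i/n^{1/3}$ the index range $\{1,\ldots,n\}$ fills the half-line $[0,\infty)$ in the limit. For positions $i\ge 2$ the entries of $M_n$ are exactly those appearing in the null $\beta$-Hermite analysis of \citetalias{RRV}: the off-diagonal $\chi_{\beta(n-i)}/\sqrt\beta$ has mean $\sqrt{n-i}/\sqrt\beta$ with Gaussian fluctuations of order $1/\sqrt\beta$, and the diagonal is an independent $N(0,2/\beta)$ sequence. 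After centering and rescaling, the deterministic drift in the off-diagonal mean produces the linear potential $x$, while the combined off-diagonal and diagonal fluctuations produce the white noise $\trb\, b'_x$, in the sense required by Theorem~\ref{t.conv}. This calculation is identical to the one in \citetalias{RRV}.

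The genuinely new ingredient is at $i=1$. The mean shift contributes $\sqrt{n}\,\mu_n$ to the $(1,1)$ entry, and after rescaling this becomes an $O(n^{2/3})$ quantity, too large to be absorbed into the limiting potential. Instead, writing the eigenvalue equation $M_n v = \Lambda v$ at position $1$, substituting $b_1 = \sqrt{n} + O_P(1)$, and subtracting the $2\sqrt{n}$ centering, one obtains to leading order the discrete boundary identity
\[
v_2 - v_1 \,=\, (1-\mu_n)\,v_1 + O_P(n^{-2/3}).
\]
Under the embedding $v_i \approx n^{-1/6} f(i/n^{1/3})$, this is the discrete form of $n^{-1/3}f'(0) = n^{-1/3}\cdot n^{1/3}(1-\mu_n)\,f(0) + o(n^{-1/3})$, which passes in the limit to the homogeneous boundary condition $f'(0) = w f(0)$ with $w = \lim n^{1/3}(1-\mu_n)$ as in~\eqref{Gspike}. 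The case $w=+\infty$ correctly forces $f(0) = 0$ and recovers the Dirichlet case of the unperturbed $\beta$-Hermite ensemble handled in \citetalias{RRV}.

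The main obstacle is arranging these two observations cleanly within the hypotheses of Theorem~\ref{t.conv}: uniform control of the bulk fluctuations so as to recover the correct stochastic integral in the limit, verification of the boundary-data hypothesis with the discrete parameter $n^{1/3}(1-\mu_n)$ serving as a proxy for $w$, and tracking the natural coupling over $\{\mu_n\}$ so that the joint convergence in~\eqref{Gspike} transfers to the operator limit (including the subtle $w=+\infty$ case). Granted these, Theorem~\ref{t.conv} then delivers the joint convergence of finitely many top eigenvalues, and of the corresponding rescaled eigenvectors in $L^2$, to those of $\Hc_{\beta,w}$, which is precisely Theorem~\ref{t.G}.
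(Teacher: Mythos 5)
Your proposal is correct and follows essentially the same path as the paper: apply Theorem~\ref{t.conv} with $m_n=n^{1/3}$, inherit Assumptions 1--2 on the bulk verbatim from the null $\beta$-Hermite analysis of \citetalias{RRV}, and treat the perturbed $(1,1)$ entry as the spike. The Assumption~3 check you defer is a one-liner in the paper: reading off the spike parameter from the decomposition~\eqref{Hn} of $H_n = n^{1/6}\bigl(2\sqrt{n}I_n - G_n\bigr)$ gives $w_n = n^{1/3}\bigl(1-\mu_n-\sqrt{2/(\beta n)}\,g_1\bigr)$, which differs from $\ol w_n = n^{1/3}(1-\mu_n)$ by $-n^{-1/6}\sqrt{2/\beta}\,g_1\to 0$ in probability, so $w_n\to w$ whenever \eqref{Gspike} holds.
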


\begin{remark} The remarks following the previous theorem apply also to this theorem; the universality issue is discussed in \citet{FP1}.
\end{remark} 
The limit of a rank one perturbed general $\beta$ soft edge thus seems to be universal, just as at $\beta=2$.  We offer two alternative descriptions.

\begin{theorem}\label{t.char} Fix $\beta>0$ and let $\Lambda_0$ be the ground state energy of $\Hc_{\beta,w}$ where $w\in(-\infty,\infty]$. The distribution $F_{\beta,w}(x) = \P_{\beta,w}(-\Lambda_0\le x)$ has the following alternative characterizations.
\begin{enumerate}[(i)]

\item(\citetalias{RRV}) Consider the stochastic differential equation
\begin{equation}\label{SDE}
dp_x = \trb db_x + \left(x-p_x^2\right)dx
\end{equation}
and let $\P_{(x_0,w)}$ be the It\=o diffusion measure on paths $\{p_x\}_{x\ge x_0}$ started from $p_{x_0} = w$. A path almost surely either explodes to $-\infty$ in finite time or grows like $p_x\sim\sqrt{x}$ as $x\to\infty$, and we have
\vspace{-6pt}
\begin{equation}\label{diffusion}
F_{\beta,w}(x) = \P_{(x,w)}\bigl(\text{\emph{$p$ does not explode}}\bigr).
\end{equation}

\item The boundary value problem
\begin{gather}\label{PDE}
\frac{\del F}{\del x} + \frac{2}{\beta}\frac{\del^2 F}{\del w^2} + \bigl(x-w^2\bigr)\frac{\del F}{\del w} = 0\qquad\text{ for }(x,w)\in\R^2,
\\\label{BC}\begin{aligned}
F(x,w)\to 1\qquad&\text{ as }x,w\to\infty\text{ together},
\\F(x,w)\to 0\qquad&\text{ as }w\to-\infty\text{ with }x\text{ bounded above}
\end{aligned}
\end{gather}
has a unique bounded solution, and we have $F_{\beta,w}(x) = F(x,w)$ for $w\in(-\infty,\infty)$. We recover the Tracy-Widom$(\beta)$ distribution $F_{\beta,\infty}(x) = \lim_{w\to\infty} F(x,w)$.
\end{enumerate}
\end{theorem}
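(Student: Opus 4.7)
For part~(i), I would apply the Riccati substitution to the eigenvalue problem, following \citetalias{RRV}. If $f$ solves $\Hc_{\beta,w} f = -x f$ with $f'(0) = wf(0)$, then $p := f'/f$ formally satisfies
\begin{equation*}
dp_s = \trb db_s + \bigl(s + x - p_s^2\bigr)\,ds,\qquad p_0 = w,
\end{equation*}
which under the reparametrization $s \mapsto s - x$ becomes~\eqref{SDE} started from time $x$ at position $w$. Sturm oscillation identifies $\{-\Lambda_0 \le x\}$ with the event that the ground state has no sign change, equivalently that $p$ does not blow up to $-\infty$. The rigorous passage from this formal calculation to the variationally defined operator~\eqref{SAvar}, as well as the dichotomy that non-exploding paths satisfy $p_s \sim \sqrt{s}$, follow \citetalias{RRV} essentially verbatim; the only difference is that the Riccati is initialized at a finite $w$ rather than at $+\infty$. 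Monotonicity of $\Lambda_0$ in $w$, clear from~\eqref{SAvar}, recovers the Dirichlet case in the limit $w \to \infty$.

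For part~(ii), set $F(x,w) := \P_{(x,w)}(p\text{ does not explode})$. The generator of~\eqref{SDE} is $L = \del/\del x + \tb\,\del^2/\del w^2 + (x-w^2)\,\del/\del w$, uniformly elliptic in $w$ on sets compact in $x$, so parabolic regularity yields $F \in C^{1,2}$. The Markov property makes $s \mapsto F(s, p_s)$ a bounded martingale up to explosion, so It\^o forces the drift $LF$ to vanish, giving~\eqref{PDE}. The boundary conditions~\eqref{BC} follow from direct estimates on the diffusion: for $w \to -\infty$ with $x$ bounded above, the drift $-w^2$ overwhelms the noise and produces an explosion in time $O(|w|^{-1})$, so $F \to 0$; for $x,w \to \infty$ together, comparison with the deterministic ODE $\dot p = x - p^2$ shows $p$ stays trapped near $+\sqrt{s}$ for all $s \ge x$ with probability tending to $1$, so $F \to 1$.

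The main obstacle is uniqueness of bounded solutions to~\eqref{PDE}--\eqref{BC}. My plan is to take $G := F_1 - F_2$ of two candidate solutions: then $G$ is bounded, satisfies $LG = 0$, and vanishes in the specified asymptotic regimes. Applying It\^o to $G(s, p_s)$ under $\P_{(x,w)}$ gives a bounded martingale up to the explosion time $\tau$. On $\{\tau < \infty\}$ one has $p_s \to -\infty$ as $s \nearrow \tau$ with $s \le \tau$ bounded, so $G(s, p_s) \to 0$ along the path by the second condition in~\eqref{BC}; on $\{\tau = \infty\}$, part~(i) gives $p_s \sim \sqrt{s}$, so $G(s, p_s) \to 0$ by the first condition. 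Optional stopping combined with dominated convergence then yields $G(x,w) = 0$. The delicate point I anticipate is ensuring the pathwise limits are strong enough to pass to the expectation; this will likely require a uniform modulus-of-continuity estimate on $G$ via parabolic regularity, together with tail bounds on $\tau$ and on the deviation of $p_s$ from $\sqrt{s}$. The Dirichlet characterization $F_{\beta,\infty}(x) = \lim_{w \to \infty} F(x, w)$ is then recovered via part~(i) and the monotonicity of $\Lambda_0$ in $w$.
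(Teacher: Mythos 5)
Your proof follows essentially the same route as the paper: part~(i) is cited from \citetalias{RRV} via the Riccati transformation and Sturm oscillation, and part~(ii) uses the Kolmogorov backward equation together with a bounded-martingale optional-stopping argument for uniqueness. One remark: the extra machinery you anticipate needing at the end (a uniform modulus of continuity on $G$, tail bounds on $\tau$ and on the deviation of $p_s$ from $\sqrt{s}$) is unnecessary --- since any candidate solution is bounded, the stopped process is a \emph{bounded} martingale and bounded convergence along with the a.s.\ pathwise dichotomy from~(i) already passes the limit in the optional-stopping identity; the paper likewise avoids your deterministic-ODE comparison for the upper boundary condition, obtaining it instead from monotonicity of $F$ in $w$ (pathwise uniqueness of the diffusion) combined with the fact that $F(\cdot,w)$ is a distribution function in $x$.
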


\begin{remark} These characterizations can be extended to the higher eigenvalues; details appear in Section~\ref{s.4}.
\end{remark}

In \citetalias{RRV} the diffusion characterization is derived with classical tools, namely the Riccati transformation and Sturm oscillation theory. We review the relevant facts in Section~\ref{s.4} before proceeding to the boundary value problem.  While the latter characterization amounts to a straightforward reformulation of the former, it is appealing in that it involves no stochastic objects. It also turns out to offer a good way of evaluating the distributions numerically \citep{S2}.  Most interestingly, however, it provides a sought-after connection with known integrable structure at $\beta = 2,4$.

To wit, let $u(x)$ be the \textbf{Hastings-McLeod solution} of the \textbf{homogeneous Painlev\'e II equation}
\begin{equation}\label{PII}
u'' = 2u^3 + xu,
\end{equation}
characterized by
\begin{equation}\label{HM}
u(x)\sim\Ai(x)\quad\text{as }x\to +\infty
\end{equation}
where $\Ai(x)$ is the Airy function (characterized in turn by $\Ai'' = x\Ai$ and $\Ai(+\infty)=0$); it is known that there is a unique such function and that it has no singularities on $\R$ \citep{HM}. Put
\vspace{-6pt}
\begin{gather}\label{v}{\textstyle
v(x) = \int_x^\infty u^2},
\\\label{EF}{\textstyle
E(x) = \exp\bigl(-\int_x^\infty u\bigr),\qquad F(x) = \exp\bigl(-\int_x^\infty v\bigr).}
\end{gather}
Next define two functions $f(x,w)$, $g(x,w)$ on $\R^2$, analytic in $w$ for each fixed $x$, by the first order linear ODEs
\begin{equation}\label{w_lax}
\frac{\del}{\del w}\begin{pmatrix}f\\g\end{pmatrix}=\begin{pmatrix}u^2&-wu-u'\\-wu+u'&w^2-x-u^2\end{pmatrix}\begin{pmatrix}f\\g\end{pmatrix}
\end{equation}
and the initial conditions
\begin{equation}\label{IC}
f(x,0) \,=\, E(x) \,=\, g(x,0).
\end{equation}

Equation \eqref{w_lax} is one member of the Lax pair for the Painlev\'e II equation.  The functions $f,g$ can also be defined in terms of the solution of the associated Riemann-Hilbert problem; analysis of the latter yields some information about $u,f,g$ summarized in Facts \ref{f.p1} and \ref{f.p2} below. The following theorem expresses the relationship between the objects just defined and the general $\beta$ characterization at $\beta=2,4$. The proof is given in Section~\ref{s.5}.

\begin{theorem}\label{t.id}
The identities
\begin{align}\label{id2}
F_{2,w}(x) &= f(x,w)F(x),
\\\label{id4}
F_{4,w}(x) &= \left.\left(\frac{(f+g)E^{-1/2}+(f-g)E^{1/2}}{2}\right)F^{1/2}\right|_{(2^{2/3}x,\,2^{1/3}w)}
\end{align}
hold and follow directly from Theorem \ref{t.char} and Facts \ref{f.p1} and \ref{f.p2}.
\end{theorem}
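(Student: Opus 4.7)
The strategy is to invoke the uniqueness statement in Theorem~\ref{t.char}(ii): since the BVP \eqref{PDE}--\eqref{BC} has a unique bounded solution for each $\beta$, it suffices to verify that the right-hand sides of \eqref{id2} and \eqref{id4} are bounded and satisfy the corresponding PDE together with the limiting boundary conditions \eqref{BC}. Facts~\ref{f.p1} and~\ref{f.p2} supply the needed smoothness, the companion member of the Lax pair (the $x$-derivative of $(f,g)^T$), and the large-$|w|$ asymptotics of $f,g$.

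For the $\beta = 2$ identity, write $F_{2,w}(x) = f(x,w)F(x)$. Differentiation in $w$ uses only \eqref{w_lax}:
\[
\partial_w(fF) \,=\, \bigl(u^2 f - (wu + u')g\bigr)F,
\]
and one obtains $\partial_w^2(fF)$ by differentiating again with \eqref{w_lax}. For $\partial_x(fF)$ we combine the companion Lax ODE for $(f,g)^T$ (Fact~\ref{f.p1} or \ref{f.p2}) with the elementary identities $F'(x) = v(x)F(x)$, $v'(x) = -u(x)^2$, $E'(x) = u(x)E(x)$, and the Painlev\'e~II equation \eqref{PII}. Substituting everything into
\[
F_x + F_{ww} + (x-w^2) F_w \,=\, 0
\]
collapses to a polynomial identity in $u, u', w, x$ which cancels term by term. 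Boundedness, and the two boundary conditions in \eqref{BC}, reduce to $F(x),E(x)\to 1$ as $x\to\infty$, $f(x,w)\to 1$ as $x,w\to\infty$ together, and $f(x,w)\to 0$ as $w\to-\infty$ with $x$ bounded; these asymptotics are part of Facts~\ref{f.p1} and~\ref{f.p2}.

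For the $\beta=4$ identity, the outer rescaling $(X,W) = (2^{2/3}x, 2^{1/3}w)$ absorbs the mismatch between $\tfrac{2}{\beta}=\tfrac{1}{2}$ and the natural normalization of the Painlev\'e Lax system. A direct chain-rule calculation shows that $F_{4,w}(x) = H(2^{2/3}x, 2^{1/3}w)$ solves the $\beta=4$ PDE if and only if $H(X,W)$ satisfies
\[
H_X + \tfrac{1}{2} H_{WW} + \tfrac{1}{2}(X - W^2) H_W \,=\, 0.
\]
Write the proposed $H$ as $\tfrac{1}{2}(\phi_+ + \phi_-)F^{1/2}$ with $\phi_\pm = (f\pm g)E^{\mp 1/2}$. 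A short computation using \eqref{w_lax} together with $E'=uE$ shows that $\phi_\pm$ each satisfy a first-order ODE in $W$ whose quadratic term in $H_{WW}$ picks up an extra factor of $\tfrac{1}{2}$ once the cross terms $\phi_+\phi_-$ generated by the $E^{\pm 1/2}$-dressing are added; analogous decoupled $X$-equations come from the companion Lax pair. Summing the two pieces, multiplying by $F^{1/2}$, and using \eqref{PII} one more time yields the required PDE. Boundedness and the limits in \eqref{BC} follow from the asymptotics of $f\pm g$ and the fact that $E,F\to 1$ at $+\infty$.

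The main obstacle is the algebraic bookkeeping in the $\beta = 4$ case: one must see why the specific linear combination $\tfrac{1}{2}\bigl[(f+g)E^{-1/2} + (f-g)E^{1/2}\bigr]$ is the one that halves the coefficient of $H_{WW}$ relative to the $\beta=2$ PDE. This is not a generic symmetry but relies on the exact off-diagonal entries $-wu\mp u'$ in the $w$-Lax operator \eqref{w_lax} coupling to the differentiated factors $E^{\mp 1/2}$ in just the right way to produce the needed cross-terms. A secondary subtlety is the boundary condition at $w\to -\infty$ for $\beta = 4$, where $\phi_+$ and $\phi_-$ must be controlled separately; this is where the Riemann-Hilbert-based asymptotics of Facts~\ref{f.p1} and~\ref{f.p2} do the work. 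Once both PDE and boundary conditions are verified, Theorem~\ref{t.char}(ii) immediately yields \eqref{id2} and \eqref{id4}.
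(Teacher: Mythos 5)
Your overall strategy is the paper's: verify that the right-hand sides of \eqref{id2} and \eqref{id4} are bounded solutions of \eqref{PDE} with the boundary behaviour \eqref{BC}, then invoke the uniqueness part of Theorem~\ref{t.char}(ii). The PDE verification itself is essentially correct as sketched (for $\beta=2$ the coefficient of $g$ cancels and the coefficient of $f$ reduces to the first integral $v+u^4-(u')^2+xu^2\equiv0$, obtained from \eqref{PII} plus decay at $+\infty$), and your structural explanation of why the particular combination in \eqref{id4} halves the diffusion coefficient matches the paper's ``more tedious but similar'' computation.

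The genuine gap is in the boundedness and boundary-behaviour step, which you dispatch as following ``from the asymptotics of $f\pm g$ and the fact that $E,F\to1$ at $+\infty$.'' Facts~\ref{f.p1} and~\ref{f.p2} give only one-variable pointwise limits (fixed $x$, $w\to\pm\infty$; fixed $w$, $x\to+\infty$); they do not by themselves give boundedness of $f$ on $\R^2$ nor the joint limits required in \eqref{BC}. Already for $\beta=2$ one needs the positivity claim $f,g>0$ (proved via \eqref{x_left}, \eqref{duality} and the $x$-Lax equation \eqref{x_lax}) and the resulting monotonicity $\del_x f=ug>0$ to conclude $f\le1$ and to upgrade \eqref{w_up}, \eqref{w_down} to statements uniform on half-lines in $x$. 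The problem is much more serious for $\beta=4$: the factor $E^{-1/2}$ blows up super-exponentially as $x\to-\infty$, so boundedness of $\tilde F_4$ on the whole plane is exactly the regime (large positive $w$, very negative $x$) about which the quoted asymptotics say nothing. The paper has to work: it bounds $\tilde F_4$ on the lower half-plane by monotonicity in $x$; it bounds $g$ on right half-planes using \eqref{duality}, the already-established $\beta=2$ identity together with Lemma~\ref{c.w} to get $\del_w f\ge0$, the consequent bound $g\le u/(w+u'/u)$, and $u'/u\sim-\sqrt x$; and it then extends boundedness to the upper-left quadrant $\{x\le0,\,w\ge0\}$ by a probabilistic argument---$\tilde F_4(x,p_x)$ is a bounded martingale there, and optional stopping at the exit time transfers the half-plane bound. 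Your proposal contains no substitute for this last step, and without boundedness on all of $\R^2$ the uniqueness theorem cannot be applied, so the proof as written does not close. (Relatedly, you locate the ``secondary subtlety'' at $w\to-\infty$; in fact that lower boundary is the easy part, handled by monotonicity and \eqref{w_down}, while the upper-left quadrant is where the real difficulty lies.)
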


The formula for $F_{2,w}$ is given by \citet{B}, although it appeared earlier in work of \citet{BR1, BR2} in a very different context.  The formula for $F_{4,w}$ appears in \citet{BR1, BR2} in a disguised form; the $w=0$ case is obtained by \citet{W}, but it is a new result in this context for $w\neq 0,\infty$. In the $\beta=4$ case we thus use our characterization to prove a guess.

In particular, we recover the Painlev\'e II representations of Tracy and Widom at these $\beta$ in a novel and simple way.

\begin{corollary}[\citealp{TW1,TW2}, \citetalias{BBP} 2005, \citealp{W}]\label{c.id}
We have
\begin{align}
F_{2,\infty}(x) &= F(x),
\\
F_{4,\infty}(2^{-2/3}x) &= \half\bigl(E^{1/2}(x)+E^{-1/2}(x)\bigr)F^{1/2}(x),
\\
F_{2,0}^{1/2}(x) = F_{4,0}(2^{-2/3}x) &= E^{1/2}(x)F^{1/2}(x).
\end{align}
\end{corollary}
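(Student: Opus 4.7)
The plan is to obtain all three identities as direct specializations of Theorem~\ref{t.id} to the two boundary values $w=0$ and $w=\infty$. The only nontrivial input beyond \eqref{id2}--\eqref{id4} is the initial condition \eqref{IC} at $w=0$ and the limiting behavior $f(x,w)\to 1$, $g(x,w)\to 0$ as $w\to\infty$, both for each fixed $x$, which one reads off from Facts~\ref{f.p1}--\ref{f.p2} (these are what drives the Riemann--Hilbert analysis of the Lax pair \eqref{w_lax} at the essential singularity in $w$).

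I would dispose of the $w=0$ statements first by pure substitution. By \eqref{IC}, $f(x,0)=g(x,0)=E(x)$, so \eqref{id2} immediately gives $F_{2,0}(x)=E(x)F(x)$ and hence $F_{2,0}^{1/2}(x)=E^{1/2}(x)F^{1/2}(x)$. For the $\beta=4$ side, substituting $x\mapsto 2^{-2/3}x$ in \eqref{id4} moves the inner evaluation point to $(2^{2/3}\cdot 2^{-2/3}x,\,2^{1/3}\cdot 0)=(x,0)$, where $f+g=2E$ and $f-g=0$; the bracketed expression in \eqref{id4} collapses to $\tfrac{1}{2}\cdot 2E(x)\cdot E^{-1/2}(x)=E^{1/2}(x)$, and multiplying by $F^{1/2}(x)$ yields $F_{4,0}(2^{-2/3}x)=E^{1/2}(x)F^{1/2}(x)$. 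The two $w=0$ formulas agree, giving the third line of the corollary.

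Next I would pass to the limit $w\to\infty$. On the probabilistic side, continuity of $w\mapsto F_{\beta,w}(x)$ identified in Theorem~\ref{t.char}(ii) (the Dirichlet law is the monotone limit of the Robin laws) gives $F_{\beta,w}(x)\to F_{\beta,\infty}(x)$. On the special-functions side, using $f\to 1$ and $g\to 0$, \eqref{id2} becomes $F_{2,\infty}(x)=F(x)$ at once. For \eqref{id4} applied at $2^{-2/3}x$, the evaluation point becomes $(x,2^{1/3}w)$; sending $w\to\infty$, the combination $(f+g)E^{-1/2}+(f-g)E^{1/2}$ tends to $E^{-1/2}(x)+E^{1/2}(x)$, producing
\[
F_{4,\infty}(2^{-2/3}x)\,=\,\tfrac{1}{2}\bigl(E^{1/2}(x)+E^{-1/2}(x)\bigr)F^{1/2}(x),
\]
as claimed.

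The corollary is thus purely a matter of evaluating Theorem~\ref{t.id} at two boundary values of $w$. The only step that is not a direct substitution is the passage $w\to\infty$, and the main (in fact only) obstacle is citing the correct boundary behavior of $f$ and $g$ at $w=+\infty$ from Facts~\ref{f.p1}--\ref{f.p2}; once those asymptotics are in hand, the three identities drop out by inspection, and in particular one recovers the Tracy--Widom Painlev\'e~II representations for $\beta=2,4$ without any further calculation.
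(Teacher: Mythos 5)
Your argument is correct and is exactly the paper's own route: the paper's proof of Corollary~\ref{c.id} consists of citing Theorem~\ref{t.id} together with the initial condition \eqref{IC} (for $w=0$) and the limit \eqref{w_up} (for $w\to\infty$), which you have simply unpacked. The substitutions and limits are all carried out correctly, including the rescaling $x\mapsto 2^{-2/3}x$ that moves the inner evaluation point in \eqref{id4} to $(x,2^{1/3}w)$.
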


\begin{remark}
The latter distribution is known to be $F_{1,\infty}(x)$ \citep{TW2}. Unfortunately we lack an independent proof.
\end{remark}

A number of points remain somewhat mysterious.  Most obviously, we lack a connection in the $\beta = 1$ case; while the literature previously did not even suggest a guess, it would now be illuminating to reconcile \eqref{PDE}, \eqref{BC} with the formula obtained by~\citet{Mo}.  Even at $\beta = 2, 4$ it seems there should be a more direct way to derive or at least understand the connection.  From the point of view of the PDE \eqref{PDE}, some kind of extra structure appears to be present at certain special values of the parameter $\beta$; what about other values?  From the point of view of nonlinear special functions, we have shown directly---independently of any limit theorems---how the well-studied Hastings-McLeod solution admits characterization in terms of a simple linear parabolic boundary value problem in the plane.

We close this introduction by advertising the sequel, in which we treat the general spiked model with analogous methods.

\section{The limit of a spiked tridiagonal ensemble}\label{s.2}

In this section we strengthen the argument of \citetalias{RRV} to apply in the rank one spiked cases.  The main convergence result will be applied in the next section to the tridiagonal forms described in the introduction.

Theorem~\ref{t.conv} below generalizes Theorem 5.1 of \citetalias{RRV} in a natural way, giving conditions under which the low-lying eigenvalues and corresponding eigenvectors of a random symmetric tridiagonal matrix converge in law to those of a random Schr\"odinger operator on the half-line with a given potential \emph{and homogeneous boundary condition at the origin}.  We include substantial parts of the original argument both for completeness and to highlight the new material; see \cite{AGZ} for another presentation of the original argument in a special case.

%In the final subsection we show how more general rank one type perturbations may be handled.

\subsection{Matrix model and embedding}

Underlying the convergence is the embedding of the discrete half-line $\Z_+ = \{0,1,\ldots\}$ into $\R_+ = [0,\infty)$ via $j\mapsto j/m_n$, where the scale factors $m_n\to\infty$ but with $m_n = o(n)$. Define an associated embedding of function spaces by step functions:
\[
\ell^2_n(\Z_+)\hookrightarrow L^2(\R_+),\quad (v_0,v_1,\ldots)\,\mapsto\, v(x) = v_{\lfloor m_n x\rfloor},
\]
which is isometric with $\ell^2_n$-norm $\norm{v}^2 = m_n^{-1}\sum_{j=0}^\infty v_j^2$.  Identify $\R^n$ with the initial coordinate subspace $\{v\in\ell^2_n:v_j=0, j\ge n\}$. We will generally not refer to the embedding explicitly.

We define some operators on $L^2$, all of which leave $\ell^2_n$ invariant. The translation operator $(T_n f)(x) = f(x+m_n^{-1})$ extends the left shift on $\ell^2_n$. The difference quotient $D_n = m_n(T_n - 1)$ extends a discrete derivative. Write $E_n = \diag(m_n,0,0,\ldots)$ for multiplication by $m_n\1_{[0,m_n^{-1})}$, a ``discrete delta function at the origin'', and $R_n = \diag(1,\dots,1,0,0,\ldots)$ for multiplication by $\1_{[0,n/m_n)}$, which extends orthogonal projection $\ell^2_n\to\R^n$.

Let $(y_{n,i;j})_{j=0,\ldots,n}$, $i=1,2$ be two discrete-time real-valued random processes with $y_{n,i;0} = 0$, and let $w_n$ be a real-valued random variable. Embed the processes as above. 
Define a ``potential'' matrix (or operator)
\[
V_n = \diag(D_n y_{n,1}) + \half\bigl(\diag(D_n y_{n,2})T_n + T_n^\dag\diag(D_n y_{n,2})\bigr),
\]
and finally set
\begin{equation}\label{Hn}
H_n \,=\, R_n\bigl(D_n^\dag D_n + V_n + w_n E_n\bigr).
\end{equation}
This operator leaves the subspace $\R^n$ invariant. The matrix of its restriction with respect to the coordinate basis is symmetric tridiagonal, with on- and off-diagonal processes
\begin{gather}
\begin{split}\label{Hn1}
m_n^2+(y_{n,1;1}+w_n)m_n,\ 2m_n^2+(y_{n,1;2}-y_{n,1;1})m_n,\ \dots,\qquad\qquad\qquad\qquad\qquad\\2m_n^2+(y_{n,1;n}-y_{n,1;n-1})m_n
\end{split}
\\\begin{split}\label{Hn2}
-m_n^2+\half y_{n,2;1}m_n,\ -m_n^2+\half(y_{n,2;2}-y_{n,2;1})m_n,\ \dots,\qquad\qquad\qquad\qquad\qquad\qquad\\-m_n^2+\half(y_{n,2;n-1}-y_{n,2;n-2})m_n
\end{split}
\end{gather}
respectively. We denote this random matrix also as $H_n$, and call it a \textbf{spiked tridiagonal ensemble}. (We could have absorbed $w_n$ into $y_{n,1}$ as an additive constant, but keep it separate for reasons that will soon be apparent.)

As in \citetalias{RRV}, convergence rests on a few key assumptions on the random variables just introduced. By choice, no additional scalings will be required. 

\bigskip\noindent\emph{Assumption 1 (Tightness and convergence).}\quad There exists a continuous random process $\{y(x)\}_{x\ge 0}$ with $y(0)=0$ such that
\begin{equation}\label{a1}\begin{gathered}
\{y_{n,i}(x)\}_{x\ge 0},\ i=1,2\quad \text{are tight in law,}
\\ y_{n,1}+y_{n,2}\,\Rightarrow\, y\quad\text{in law}
\end{gathered}\end{equation}
with respect to the compact-uniform topology on paths.

\bigskip\noindent\emph{Assumption 2 (Growth and oscillation bounds).}\quad There is a decomposition
\begin{equation*}
y_{n,i;j} = m_n^{-1}\sum_{k=0}^{j-1}\eta_{n,i;k}+\omega_{n,i;j}
\end{equation*}
with $\eta_{n,i;j}\ge 0$ such that for some deterministic unbounded nondecreasing continuous functions $\ol\eta(x)>0$, $\zeta(x)\ge 1$ not depending on $n$, and random constants $\kappa_n\ge 1$ defined on the same probability spaces, the following hold: The $\kappa_n$ are tight in distribution, and for each $n$ we have almost surely
\begin{align}\label{a21}
\ol\eta(x)/\kappa_n-\kappa_n\:\le\:\eta_{n,1}(x)+\eta_{n,2}(x)\:&\le\:\kappa_n\bigl(1+\ol\eta(x)\bigr),
\\\label{a22} \eta_{n,2}(x)\:&\le\:2m_n^2,
\\\label{a23} \abs{\omega_{n,1}(\xi)-\omega_{n,1}(x)}^2 +\abs{\omega_{n,2}(\xi)-\omega_{n,2}(x)}^2\:&\le\:\kappa_n\bigl(1+\ol\eta(x)/\zeta(x)\bigr)
\end{align}
for all $x,\xi\in[0,n/m_n]$ with $\abs{\xi-x}\le 1$.

\bigskip\noindent\emph{Assumption 3 (Critical or subcritical spiking).} For some nonrandom $w\in(-\infty,\infty]$, we have
\begin{equation}\label{a3}
w_n\,\to\,w\quad\text{in probability}.
\end{equation}

The necessity of first and third assumptions will be evident when we define a continuum limit and prove convergence.  The more technical second assumption ensures tightness of the matrix eigenvalues; its limiting version (derived in the next subsection) will guarantee discreteness of the limiting spectrum. Lastly, we note that for given $y_n$ the models may be coupled over different choices of $w_n$.

\subsection{Reduction to deterministic setting}

In the next subsection we will define a limiting object in terms of $y$ and $w$; we want to prove that the discrete models converge to this continuum limit in law. We reduce the problem to a deterministic convergence statement as follows. First, select any subsequence.  It will be convenient to extract a further subsequence so that certain additional tight sequences converge jointly in law; Skorokhod's representation theorem \citep[see][]{EthierKurtz} says this convergence can be realized almost surely on a single probability space.  We may then proceed pathwise.

In detail, consider \eqref{a1}--\eqref{a3}. Note in particular that the upper bound of~\eqref{a21} shows that the piecewise linear process $\left\{\int_0^x\eta_{n,i}\right\}_{x\ge0}$ is tight in distribution under the compact-uniform topology for $i=1,2$. Given a subsequence, we pass to a further subsequence so that the following distributional limits exist jointly: 
\begin{equation}\label{ar}\begin{aligned}
y_{n,i} \,&\Rightarrow\, y_i,
\\ {\textstyle\int_0\eta_{n,i}}\,&\Rightarrow\, \eta_i^\dag,
\\ \kappa_n \,&\Rightarrow\, \kappa,
\end{aligned}\end{equation}
for $i=1,2$, where convergence in the first two lines is in the compact-uniform topology.  We realize~\eqref{ar} pathwise a.s.\ on some probability space and continue in this deterministic setting.

We can take the bounds~\eqref{a21},\eqref{a23} to hold with $\kappa_n$ replaced with a single constant $\kappa$. Observe that~\eqref{a21} gives a local Lipschitz bound on the $\int\eta_{n,i}$, which is inherited by their limits $\eta^\dag_i$. Thus $\eta_i = \bigl(\eta^\dag_i\bigr)'$ is defined almost everywhere on $\R_+$, satisfies~\eqref{a21}, and may be defined to satisfy this inequality everywhere. Furthermore, one easily checks that $m_n^{-1}\sum\eta_{n,i}\to\int\eta_i$ compact-uniformly as well (use continuity of the limit). Therefore $\omega_{n,i} = y_{n,i}-m_n^{-1}\sum\eta_{n,i}$ must have a continuous limit $\omega_i$ for $i = 1,2$; moreover, the bound \eqref{a23} is inherited by the limits. Lastly, put $\eta = \eta_1+\eta_2$, $\omega = \omega_1+ \omega_2$ and note that $y_i = \int\eta_i + \omega_i$ and $y = \int\eta + \omega$.

Without further reference to the subsequences, we will assume this situation for the remainder of the section.

\subsection{Limiting operator and variational characterization}

Formally, the limit of the spiked tridiagonal ensemble $H_n$ will be the eigenvalue problem
\begin{equation}\label{e_prob}\begin{gathered}
\quad\Hc f = \Lambda f\quad\text{on }\R_+
\\f'(0) = w f(0),\qquad\qquad f(+\infty)= 0
\end{gathered}\end{equation}
where $\Hc = -d^2/dx^2 + y'(x)$ and $w\in(-\infty,\infty]$ is fixed. If $w=+\infty$, the boundary condition is to be interpreted as $f(0) = 0$; we refer to this as the \textbf{Dirichlet case}, and it will require special treatment in what follows. The primary object for us will be a symmetric bilinear form associated with the eigenvalue problem~\eqref{e_prob}.

Define a space of test functions $C_0^\infty$ consisting of smooth functions on $\R_+$ with compact support that \emph{may contain the origin, except in the Dirichlet case.} Denote by $\norm{\cdot}$ and $\ip{\cdot,\cdot}$ the norm and inner product of $L^2[0,\infty)$. Define a weighted Sobolev norm by
\[
\norm{f}_*^2 \,=\, \Norm{f'}^2 + \Norm{f\weight}^2
\]
and an associated Hilbert space $L^*$ as the closure of $C_0^\infty$ under this norm. Note that our $L^*$ differs slightly from the one in \citetalias{RRV}\@. We register some basic facts about $L^*$ functions.

\begin{fact}\label{f.1}
Any $f\in L^*$ is uniformly H\"older(1/2)-continuous, satisfies $\abs{f(x)}\le \norm{f}_*$ for all $x$, and in the Dirichlet case has $f(0) = 0$.
\end{fact}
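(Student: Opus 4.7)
The plan is to verify the three claims first on the dense subspace $C_0^\infty$ and then pass to arbitrary $f\in L^*$ by continuity along an approximating sequence. Since $\ol\eta>0$, the $L^*$-norm dominates the $L^2$-norm, and (via the pointwise bound I will establish) also the $L^\infty$-norm on $C_0^\infty$, so $L^*$-convergence will force uniform convergence on all of $\R_+$; the three properties will then transfer to the uniform limit.

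For $f\in C_0^\infty$ and $0\le x<y$, the fundamental theorem of calculus and Cauchy--Schwarz yield
\[
\Abs{f(y)-f(x)} \,=\, \Abs{\int_x^y f'(t)\,dt} \,\le\, \sqrt{y-x}\,\Norm{f'} \,\le\, \sqrt{y-x}\,\norm{f}_*,
\]
which is the Hölder(1/2) bound with constant $\norm{f}_*$. For the pointwise bound, the compact support of $f$ lets me integrate from $x$ out to infinity and write
\[
f(x)^2 \,=\, -\int_x^\infty (f^2)'(t)\,dt \,=\, -2\int_x^\infty f(t)f'(t)\,dt.
\]
Cauchy--Schwarz together with $2ab\le a^2+b^2$ and $\norm{f}\le\Norm{f\weight}$ then give
\[
f(x)^2\,\le\,2\norm{f}\,\Norm{f'}\,\le\,\norm{f}^2+\Norm{f'}^2\,\le\,\norm{f}_*^2.
\]
In the Dirichlet case the support of $f$ is bounded away from $0$, so $f(0)=0$ automatically.

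To pass to $f\in L^*$, I pick $f_n\in C_0^\infty$ with $\norm{f_n-f}_*\to 0$. Applying the pointwise and Hölder bounds just established to the differences $f_n-f_m$ shows that $(f_n)$ is Cauchy in $L^\infty$ and even Hölder(1/2)-equicontinuous, so it converges uniformly to a continuous function $\tilde f$ which satisfies $\Abs{\tilde f(x)}\le\lim\norm{f_n}_*=\norm{f}_*$ and is uniformly Hölder(1/2). The only mildly delicate point is the identification $\tilde f=f$ as elements of $L^*$: this follows because $\norm{f_n-f}_*\to 0$ forces $f_n\to f$ in $L^2$, while uniform convergence on compacts gives $f_n\to\tilde f$ in $L^2_\loc$, so $\tilde f=f$ a.e.\ and $\tilde f$ is the continuous representative of $f$. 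In the Dirichlet case $f_n(0)=0$ for every $n$, whence $\tilde f(0)=0$ by uniform convergence. The argument is essentially routine; no single step presents a real obstacle.
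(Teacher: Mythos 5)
Your proposal is correct and follows essentially the same route as the paper: establish the H\"older and pointwise bounds on $C_0^\infty$ via the fundamental theorem of calculus, Cauchy--Schwarz and $f(x)^2=-\int_x^\infty (f^2)'\le 2\norm{f}\Norm{f'}\le\norm{f}_*^2$, then transfer them to general $f\in L^*$ by density. The only (immaterial) difference is in the limiting step: the paper extracts a compact-uniformly convergent subsequence from an $L^*$-bounded sequence (Arzel\`a--Ascoli), whereas you note directly that an $L^*$-Cauchy approximating sequence is uniformly Cauchy, which is an equally valid and slightly more direct way to identify the continuous representative.
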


\begin{proof}
We have $\abs{f(y)-f(x)} = \abs{\int_x^y f'} \le\norm{f'}\abs{y-x}^{1/2}$. For $f\in C_0^\infty$ we have $f(x)^2 = -\int_x^\infty {(f^2)}' \le 2\norm{f'}\norm{f}\le\norm{f}_*^2$; an $L^*$-bounded sequence in $C_0^\infty$ therefore has a compact-uniformly convergent subsequence, so we can extend this bound to $f\in L^*$ and conclude further that $f(0) = 0$ in the Dirichlet case.
\end{proof}

For future reference, we also record some compactness properties of the $L^*$-norm.

\begin{fact}\label{f.2} Every $L^*$-bounded sequence has a subsequence converging in the following modes: (i) weakly in $L^*$, (ii) derivatives weakly in $L^2$, (iii) uniformly on compacts, and (iv) in $L^2$.
\end{fact}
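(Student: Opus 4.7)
Let $\{f_n\}\subset L^*$ be $L^*$-bounded. The plan is to extract successive subsequences realizing each mode of convergence; since (i)--(iv) are compatible, a diagonal argument is not even needed.

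First, since $L^*$ is a Hilbert space, the Banach--Alaoglu theorem yields a subsequence converging weakly in $L^*$, giving (i). For (ii), note that the derivative map $f\mapsto f'$ from $L^*$ to $L^2$ is bounded (indeed, $\norm{f'}\le\norm{f}_*$), hence weakly continuous; so along the same subsequence the derivatives converge weakly in $L^2$. Alternatively one can extract a weakly convergent subsequence of $\{f_n'\}$ directly from $L^2$ boundedness.

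For (iii), Fact~\ref{f.1} gives the uniform bound $\abs{f_n(x)}\le\norm{f_n}_*$ and the uniform H\"older($1/2$) estimate $\abs{f_n(y)-f_n(x)}\le\norm{f_n'}\abs{y-x}^{1/2}\le\norm{f_n}_*\abs{y-x}^{1/2}$. The family $\{f_n\}$ is therefore equicontinuous and uniformly bounded on every compact subset of $\R_+$, so the Arzel\`a--Ascoli theorem (together with a diagonal argument across an exhaustion by compacts) furnishes a further subsequence converging uniformly on compacts.

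The main (though still short) point is (iv), where we exploit the unboundedness of $\ol\eta$ built into the weighted $L^*$-norm. Given $\e>0$, choose $M$ so large that $1+\ol\eta(x)\ge 1/\e$ for all $x\ge M$. Then for each $n$,
\[
\int_M^\infty f_n^2\,dx\,\le\,\e\int_M^\infty f_n^2\bigl(1+\ol\eta\bigr)dx\,\le\,\e\norm{f_n}_*^2\,\le\,C\e,
\]
so the tails $\int_M^\infty f_n^2$ are small uniformly in $n$. On $[0,M]$ the subsequence from (iii) converges uniformly and hence in $L^2[0,M]$. Splitting the integral at $M$ and sending $\e\to 0$ (extracting a further subsequence if one wishes to apply the argument repeatedly with smaller $\e$) shows the same subsequence is $L^2$-Cauchy on $\R_+$, yielding (iv). The limits of course agree across (i)--(iv) by uniqueness of weak limits and the embedding $L^*\hookrightarrow L^2_{\loc}$.

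No step poses a serious obstacle; the only ingredient beyond standard functional-analytic boilerplate is the observation in (iv) that the weight $\sqrt{1+\ol\eta}$ growing to $\infty$ provides uniform tail control, converting uniform-on-compacts convergence into $L^2$ convergence.
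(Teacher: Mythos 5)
Your proof is correct and follows essentially the same route as the paper's: Banach--Alaoglu for (i)--(ii), Fact~\ref{f.1} plus Arzel\`a--Ascoli for (iii), and tail control from the weight $\ol\eta$ converting local $L^2$ convergence into global $L^2$ convergence for (iv). Your identification of the weak $L^2$ limit of $f_n'$ with $f'$ via boundedness of the derivative map $L^*\to L^2$ is a slightly slicker substitute for the paper's argument of integrating against indicators, and the parenthetical about re-extracting subsequences in (iv) is unnecessary (the tail bound plus uniform convergence on $[0,M]$ already gives $L^2$-Cauchy directly), but neither affects correctness.
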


\begin{proof} (i) and (ii) are just Banach-Alaoglu; (iii) is the previous fact and Arzel\`a-Ascoli again; (iii) implies $L^2$ convergence locally, while the uniform bound on $\int\ol\eta f_n^2$ produces the uniform integrability required for (iv). Note that the weak limit in (ii) really is the derivative of the limit function, as one can see by integrating against functions $\1_{[0,x]}$ and using pointwise convergence.
\end{proof}

We introduce a symmetric bilinear form on $C_0^\infty\times C_0^\infty$ by
\begin{equation}\label{Hya}
\Hyw{\ph,\psi} \,=\, \ip{\ph',\psi'} -\ip{{(\phi\psi)}',y}+ w\,\ph(0)\psi(0),
\end{equation}
dropping the last term in the Dirichlet case. (We could have absorbed $w$ into $y$ as an additive constant in the finite case, but prefer to keep the boundary term separate.) Formally, $\Hyw{\ph,f}$ is just $\ip{\ph,\Hc f}$; notice how the mixed boundary condition is built ``implicitly'' into the form, while the Dirichlet boundary condition is built ``explicitly'' into the space.

\begin{lemma}\label{l.cbound} There are constants $c,C>0$ so that the following bounds holds for all $f\in C_0^\infty$:
\begin{equation}\label{cbound}
c\norm{f}_*^2 - C\norm{f}^2\,\le\,\Hyw{f,f}\,\le\,C\norm{f}_*^2.
\end{equation}
In particular, $\Hyw{\cdot,\cdot}$ extends uniquely to a continuous symmetric bilinear form on $L^*\times L^*$ satisfying the same bounds.
\end{lemma}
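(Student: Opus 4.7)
The plan is to decompose $y=\int\eta+\omega$ per Section~\ref{s.2}, handle the drift $\int\eta$ with a single integration by parts, and control the oscillation $\omega$ as a lower-order perturbation via the H\"older-type bound~\eqref{a23}; the boundary contribution $wf(0)^2$ is absorbed by Fact~\ref{f.1}. For the drift, integration by parts on $f\in C_0^\infty$ (using compact support and $\int_0^0\eta=0$) gives
$$-\Ip{(f^2)', {\textstyle\int}\eta} \,=\, \int_0^\infty f^2\eta,$$
and the two-sided bound~\eqref{a21} (with $\kappa_n$ replaced by the deterministic $\kappa$ after Skorokhod) then yields $\tfrac{1}{\kappa}\Norm{f\weight}^2-(\kappa+\tfrac{1}{\kappa})\norm{f}^2 \le \int f^2\eta \le \kappa\Norm{f\weight}^2$, accounting for the bulk of both sides of~\eqref{cbound}.

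For the oscillation, partition $\R_+$ into unit intervals $I_k=[k,k+1)$ and write, on each piece,
$$-\int_{I_k}(f^2)'\omega \,=\, -\int_{I_k}(f^2)'\bigl(\omega-\omega(k)\bigr) - \omega(k)\bigl(f^2(k+1)-f^2(k)\bigr).$$
The first term is bounded using $\sup_{I_k}|\omega-\omega(k)|\le C\bigl(1+\ol\eta(k)/\zeta(k)\bigr)^{1/2}$ from~\eqref{a23} together with $\int_{I_k}|(f^2)'|\le\norm{f}_{L^2(I_k)}^2+\norm{f'}_{L^2(I_k)}^2$; summing over $k$ and exploiting $\zeta(k)\to\infty$ lets the large-$k$ tail of the $\ol\eta$-weighted error be absorbed into a small multiple of $\Norm{f\weight}^2$, with the small-$k$ head swept into $C(\norm{f}^2+\norm{f'}^2)$ using local boundedness of $\omega$ and $\ol\eta$. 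The telescoping piece, after summation by parts, becomes $\sum_k(\omega(k+1)-\omega(k))f^2(k+1)$ and is handled analogously using~\eqref{a23}.

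By Fact~\ref{f.1}, $f(0)^2\le\norm{f}_*^2$, so $|wf(0)^2|\le|w|\norm{f}_*^2$ in the non-Dirichlet case (and this term is absent in the Dirichlet case). Combining yields~\eqref{cbound} on $C_0^\infty$; the extension to $L^*\times L^*$ follows from density of $C_0^\infty$ in $L^*$ (by definition) together with the upper bound, which is norm-continuous bilinear, and both inequalities persist in the limit since $\norm{\cdot}$ and $\norm{\cdot}_*$ are continuous on $L^*$. The delicate point will be securing a strictly positive $c$ in the lower bound: one must choose a cut-off $k_0$ so that $1/\sqrt{\zeta(k)}$ is small enough for $k\ge k_0$ to absorb the oscillation tail into, say, $\tfrac{1}{2\kappa}\Norm{f\weight}^2$, while handling the finite range $k<k_0$ purely in the $C\norm{f}^2+C\norm{f'}^2$ budget via local boundedness of $\ol\eta$ and $\omega$.
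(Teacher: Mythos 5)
Your high-level plan matches the paper's: decompose $y=\int\eta+\omega$, integrate the drift by parts and apply~\eqref{a21}, treat the oscillation as a lower-order perturbation via~\eqref{a23}, and handle the boundary term via Fact~\ref{f.1}. However, the execution of the oscillation and boundary bounds has a real gap that breaks the \emph{lower} bound in~\eqref{cbound}.

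The problem is that your split of $\int_{I_k}\abs{(f^2)'}\,\abs{\omega-\omega(k)}$ via $\int_{I_k}\abs{(f^2)'}\le\norm{f}^2_{L^2(I_k)}+\norm{f'}^2_{L^2(I_k)}$ allocates the fluctuation weight symmetrically to the $\norm{f'}^2_{L^2(I_k)}$ and $\norm{f}^2_{L^2(I_k)}$ pieces. The $f^2$-piece can indeed be absorbed into a small multiple of $\Norm{f\weight}^2$ using $\zeta\to\infty$, but the $(f')^2$-piece picks up a coefficient of at least $\kappa^{1/2}\ge 1$ (even on the tail, where the fluctuation envelope is $\kappa^{1/2}(1+\ol\eta/\zeta)^{1/2}\ge\kappa^{1/2}$, and there is no assumption that $\ol\eta/\zeta\to 0$). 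Since $\norm{f'}^2$ is a full constituent of $\norm{f}_*^2$ and the drift only gives you a $\tfrac{1}{\kappa}\norm{f}_*^2$ margin with $\kappa\ge 1$, an oscillation contribution of $\ge-\kappa^{1/2}\norm{f'}^2$ wipes out the lower bound entirely; the ``head/tail split with cut-off $k_0$'' you flag does not help, because the issue is the coefficient in front of $\norm{f'}^2_{L^2(I_k)}$, which is not small for any $k$. What is needed is a tunable Cauchy--Schwarz, e.g.\ $2\int\abs{ff'}\abs{\omega-\omega(k)}\le\sqrt\e\int(f')^2+\tfrac{1}{\sqrt\e}\int f^2(\omega-\omega(k))^2$, so that the $(f')^2$-side carries a controllably small constant and all the $\ol\eta$-weight is pushed onto the $f^2$-side where it is dominated by $\Norm{f\weight}^2$. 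This is exactly what the paper does (there via a moving average $\ol\omega$ rather than a unit-interval partition, but the structural point is the same). The same issue afflicts your boundary term: $\abs{wf(0)^2}\le\abs{w}\norm{f}_*^2$ is useless for the lower bound when $\abs{w}\ge 1/\kappa$; you need the sharper intermediate estimate from Fact~\ref{f.1}'s proof, $f(0)^2\le 2\norm{f'}\norm{f}\le\e\norm{f}_*^2+\e^{-1}\norm{f}^2$, again with tunable $\e$. Your telescoping term $\sum_k(\omega(k+1)-\omega(k))f^2(k+1)$ has the same defect once one converts the pointwise values $f^2(k+1)$ into $L^2$-data, as that reintroduces $\int\abs{ff'}$ with a fixed constant. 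With these two repairs the argument closes; the upper bound, which does not care about the sign of the $\norm{f'}^2$-coefficient, is fine as you have it, and the extension to $L^*\times L^*$ by density is correct.
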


\begin{proof}
For the first two terms of~\eqref{Hya}, we use the decomposition $y=\int\eta+\omega$ from the previous subsection.  Integrating the $\int\eta$ term by parts, the limiting version of~\eqref{a21} easily yields
\[
\tfrac{1}{\kappa}\norm{f}_*^2 - C'\norm{f}^2\,\le\,\norm{f'}^2 + \ip{f^2,\eta}\,\le\,\kappa\norm{f}_*^2.
\]
Break up the $\omega$ term as follows. The moving average $\ol \omega_x = \int_x^{x+1}\omega$ is differentiable with $\ol \omega_x' = \omega_{x+1}-\omega_{x}$; writing $\omega = \ol \omega + (\omega - \ol \omega)$, we have
\[
-\Ip{{(f^2)}',\omega} = \Ip{f,\ol \omega'f} + 2\Ip{f',(\ol \omega-\omega)f}.
\]
The limiting version of~\eqref{a23} gives $\max\bigl(\abs{\omega_{\xi}-\omega_{x}},\abs{\omega_{\xi}-\omega_x}^2\bigr)\le C_\e+\e\ol\eta(x)$ for $\abs{\xi-x}\le 1$, where $\e$ can be made small. In particular, the first term above is bounded absolutely by $\e\norm{f}_*^2+C_\e\norm{f}^2$. Averaging, we also get $\abs{\ol \omega_x - \omega_x}\le(C_\e+\e\ol\eta(x))^{1/2}$; Cauchy-Schwarz then bounds the second term above absolutely by $\sqrt{\e}\int_0^\infty {(f')}^2+\frac{1}{\sqrt{\e}}\int_0^\infty{f^2(C_\e+\e\ol\eta)}$ and thus by $\sqrt{\e}\norm{f}_*^2+C_\e'\norm{f}^2$. Now combine all the terms and set $\e$ small to obtain the result.

For the boundary term $wf(0)^2$, it suffices to obtain a bound of the form $f(0)^2 \le \e\norm{f}_*^2+C''_\e\norm{f}^2$. But $f(0)^2 \le 2\norm{f'}\norm{f}$ from the proof of Fact~\ref{f.1} gives such a bound with $C''_\e = 1/\e$. 

The $L^*$ form bound follows from the fact that the $L^*$-norm dominates the $L^2$-norm. We obtain the quadratic form bound $\abs{\Hyw{f,f}}\le C\norm{f}^2_*$; it is a standard Hilbert space fact that it may be polarized to a bilinear form bound \citep[see e.g.][]{Halmos}.
\end{proof}

\begin{definition}\label{d.ee} Call $(\Lambda,f)$ an \textbf{eigenvalue-eigenfunction pair} if $f\in L^*$, $\norm{f}=1$, and for all $\ph\in C_0^\infty$ we have
\begin{equation}\label{ee}
\Hyw{\ph,f} = \Lambda\ip{\ph,f}.
\end{equation}
Note that~\eqref{ee} then automatically holds for all $\ph\in L^*$, by $L^*$-continuity of both sides.
\end{definition}

\begin{remark} This definition represents a weak or distributional version of the problem~\eqref{e_prob}. As further justification, integrate by parts to write the definition
\[
\ip{\ph',f'}-\ip{{(\ph f)}',y}+w\,\ph(0)f(0) = \Lambda\ip{\ph,f}
\]
in the form
\[
\ip{\ph',f'} - \ip{\ph',fy} +\ip{\ph',{\textstyle\int_0 f'y}}-wf(0)\ip{\ph',\1} = -\Lambda\ip{\ph',{\textstyle\int_0 f}},
\]
which is equivalent to
\begin{equation}\label{integrated}
f'(x) = wf(0) + y(x)f(x) - \int_0^x\!yf' - \Lambda\int_0^x\!f\qquad \text{a.e.\ $x$.}
\end{equation}
In the Dirichlet case the first term on the right is replaced with $f'(0)$. On the one hand \eqref{integrated} shows that $f'$ has a continuous version, and the equation may be taken to hold everywhere. In particular, $f$ satisfies the boundary condition of~\eqref{e_prob} at the origin. On the other hand, \eqref{integrated} is a straightforward integrated version of the eigenvalue equation in which the potential term has been interpreted via integration by parts. This equation will be useful in Lemma~\ref{l.one} below and is the starting point for a rigorous derivation of \eqref{SDE} in the stochastic Airy case.
\end{remark}

\begin{remark} The requirement $f\in L^*$ in Definition~\ref{d.ee} is a technical convenience. Regarding regularity, we need $f$ at least absolutely continuous to make sense of the eigenvalue equation in either an integrated or a distributional sense; we have seen, however, that solutions are in fact $C^1$. Regarding behaviour at infinity, the diffusion picture developed by \citetalias{RRV} shows a dichotomy: almost all solutions of the eigenvalue equation grow super-exponentially at infinity, except for the eigenfunctions which decay sub-exponentially.
\end{remark}

We now characterize eigenvalue-eigenfunction pairs variationally.  It is easy to see that each eigenspace is finite-dimensional: a sequence of normalized eigenfunctions must have an $L^2$-convergent subsequence by~\eqref{cbound} and Fact \ref{f.2}. By the same argument, eigenvalues can accumulate only at infinity. In fact, more is true: 

\begin{lemma}\label{l.one} For each $\Lambda\in\R$, the corresponding eigenspace is at most one-dimensional.
\end{lemma}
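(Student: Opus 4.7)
The plan is to exploit the integrated form of the eigenvalue equation recorded in the remark after Definition~\ref{d.ee}, namely
\[
f'(x) = \alpha + y(x) f(x) - \int_0^x y f' - \Lambda \int_0^x f,
\]
with $\alpha = w f(0)$ in the Neumann/Robin case and $\alpha = f'(0)$ (together with $f(0)=0$) in the Dirichlet case. Substituting the identity $f(x) = f(0) + \int_0^x f'$ into the terms $y(x)f(x)$ and $\int_0^x f$, and applying Fubini to the iterated integral, turns this into a linear Volterra integral equation for $f'$ alone,
\[
f'(x) = \alpha + \bigl(y(x) - \Lambda x\bigr) f(0) + \int_0^x \bigl[y(x) - y(s) - \Lambda(x-s)\bigr] f'(s)\, ds,
\]
whose kernel is bounded on each compact subinterval since $y$ is continuous.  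Given the initial data $(f(0),\alpha)$, this equation determines $f'$ (and hence $f$) uniquely by a standard Gronwall argument applied to the difference of two candidate solutions, which is itself continuous by the equation.

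In the Neumann/Robin case, $\alpha$ is determined by $f(0)$, so the two-parameter family of initial data collapses to a one-parameter family.  Given two eigenfunctions $f_1, f_2$ for the same $\Lambda$, I form $g = f_2(0) f_1 - f_1(0) f_2$; then $g$ satisfies the same integrated equation with both $g(0) = 0$ and $\alpha_g = w g(0) = 0$, and Volterra uniqueness forces $g \equiv 0$, so $f_1$ and $f_2$ are linearly dependent.  The degenerate case $f_1(0) = f_2(0) = 0$ cannot arise, because the same uniqueness statement with trivial data would give $f_i \equiv 0$, contradicting $\norm{f_i}=1$.  In the Dirichlet case, $f(0)=0$ is fixed and only $\alpha = f'(0)$ is free; setting $g = f_2'(0) f_1 - f_1'(0) f_2$ yields trivial initial data, so again $g \equiv 0$.

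I expect the main obstacle to be a sanity check rather than a genuine difficulty: the coefficient $y$ is only continuous, so one cannot treat the eigenvalue equation as a classical second-order ODE and invoke Picard-Lindel\"of directly.  Recasting it as a first-order Volterra equation for $f'$ sidesteps this entirely, since only boundedness of the kernel on compacts is needed, and this is supplied by continuity of $y$.  Once uniqueness is in hand, the reduction from the two-parameter initial data to a one-parameter family via the boundary condition at $0$ gives the lemma immediately.
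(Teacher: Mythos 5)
Your proposal is correct and matches the paper's proof in essence: both recast the integrated eigenvalue equation as a Volterra equation for $f'$ with continuous kernel, invoke Gronwall for uniqueness given the data $(f(0),f'(0))$, and then observe that the boundary condition reduces the initial data to a one-dimensional family. The paper compresses the final linear-algebra step into the phrase ``by linearity, it suffices to show a solution with $f'(0)=f(0)=0$ vanishes identically,'' which you have simply spelled out in full, including the degenerate case.
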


\begin{proof}
By linearity, it suffices to show a solution of~\eqref{integrated} with $f'(0)=f(0)=0$ must vanish identically.  Integrate by parts to write
\[
f'(x) = y(x)\!\int_0^x\!f' - \int_0^x\!yf' - \Lambda x\!\int_0^x\!f' +\Lambda\!\int_0^x\!tf'(t)dt,
\]
which implies that $\abs{f'(x)}\le C(x)\int_0^x\abs{f'}$ with some $C(x)<\infty$ increasing in $x$. Gronwall's lemma then gives $f'(x) = 0$ for all $x\ge 0$.
\end{proof}

The eigenfunction corresponding to a given eigenvalue is thus uniquely specified with the additional sign normalization $-\frac\pi2<\arg\bigl(f(0),f'(0)\bigr)\le\frac\pi2$.  We order eigenvalue-eigenfunction pairs by their eigenvalues. As usual, it follows from the symmetry of the form that distinct eigenfunctions are $L^2$-orthogonal.

\begin{proposition}\label{p.var} There is a well-defined $(k+1)$st lowest eigen\-value-eigenfunction pair $(\Lambda_k,f_k)$; it is given recursively by the minimum and minimizer in the variational problem
\[
\inf_{\substack{f\in L^*,\ \norm{f}=1,\\ f\perp f_0,\dots, f_{k-1}}} \Hyw{f,f}.
\]
\end{proposition}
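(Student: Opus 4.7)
The plan is a standard Rayleigh--Ritz / calculus of variations argument, resting on the coercivity bound \eqref{cbound}, the $L^*$-compactness in Fact~\ref{f.2}, and the simplicity statement in Lemma~\ref{l.one}. We construct the pairs inductively; given $f_0,\ldots,f_{k-1}$ are in hand, set $\Lambda_k$ to be the infimum displayed in the proposition, which is finite by the lower bound in \eqref{cbound}. For any minimizing sequence $\{g_n\}$, the same inequality forces $\norm{g_n}_*$ bounded, so Fact~\ref{f.2} extracts a subsequence converging to some $f_k\in L^*$ weakly in $L^*$, with derivatives weakly in $L^2$, uniformly on compacts, and strongly in $L^2$. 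The last mode gives $\norm{f_k}=1$ and passes the orthogonality constraints $\ip{g_n,f_j}\to\ip{f_k,f_j}=0$ to the limit.

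The main analytic step is lower semicontinuity $\Hyw{f_k,f_k}\le\liminf\Hyw{g_n,g_n}$. Weak $L^2$ convergence of derivatives handles $\norm{g'}^2$; the growth term $\int g^2\eta$ is lsc by Fatou because $\eta\ge 0$ and $g_n\to f_k$ pointwise; and the boundary contribution $wg(0)^2$ is continuous under pointwise evaluation at $0$ (absent in the Dirichlet case). The principal obstacle is the oscillatory piece $-\ip{(g^2)',\omega}$, where $\omega$ has irregular distributional derivative. I would reuse the decomposition from the proof of Lemma~\ref{l.cbound}: writing $\omega=\ol\omega+(\omega-\ol\omega)$ for the unit moving average $\ol\omega$, this term becomes $\ip{g,\ol\omega'g}+2\ip{g',(\ol\omega-\omega)g}$. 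In the first summand, $\ol\omega'$ is pointwise controlled by the square root of $\kappa(1+\ol\eta/\zeta)$; convergence follows from uniform convergence on compacts, the uniform bound on $\int g_n^2\ol\eta$, and the growth $\ol\eta,\zeta\to\infty$. In the second, the strong $L^2$ convergence of $(\ol\omega-\omega)g_n$ to $(\ol\omega-\omega)f_k$ (derived similarly) pairs with the weak $L^2$ convergence of $g_n'$.

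With a minimizer in hand, derive the eigenvalue property by Lagrange multipliers. Constrained optimality at $f_k$ gives, for all $\phi\in L^*$,
\[
\Hyw{\phi,f_k}\,=\,\Lambda_k\ip{\phi,f_k}+\sum_{j<k}\mu_j\ip{\phi,f_j}.
\]
Testing against $\phi=f_i$ for $i<k$, the left side equals $\Hyw{f_k,f_i}=\Lambda_i\ip{f_k,f_i}=0$ by symmetry of the form and the inductive hypothesis, while the right side reduces to $\mu_i$ by $L^2$-orthonormality of the $f_j$. Thus $\mu_j=0$ for all $j$, giving $\Hyw{\phi,f_k}=\Lambda_k\ip{\phi,f_k}$ for all $\phi\in L^*$. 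Restricting to $\phi\in C_0^\infty$ shows $(\Lambda_k,f_k)$ is an eigenvalue-eigenfunction pair per Definition~\ref{d.ee}.

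Finally, the strict inequalities $\Lambda_0<\Lambda_1<\cdots$ are forced by Lemma~\ref{l.one}, since equality $\Lambda_k=\Lambda_{k+1}$ would yield two $L^2$-orthogonal eigenfunctions for the same eigenvalue. Divergence $\Lambda_k\to\infty$ follows because otherwise $\{f_k\}$ would be an $L^*$-bounded orthonormal sequence, contradicting the $L^2$-strong compactness in Fact~\ref{f.2}. To rule out any missed pair $(\Lambda,f)$, symmetry of the form gives $(\Lambda-\Lambda_j)\ip{f,f_j}=0$, so $f\perp f_j$ whenever $\Lambda\neq\Lambda_j$. Then $\Lambda<\Lambda_0$ or $\Lambda_{k-1}<\Lambda<\Lambda_k$ would make $f$ admissible in the corresponding infimum, contradicting its defining minimality. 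Since $\Lambda_k\to\infty$, we must have $\Lambda=\Lambda_k$ for some $k$, and simplicity then gives $f=\pm f_k$.
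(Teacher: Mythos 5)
Your proposal is correct and follows essentially the same route as the paper: both are direct-method arguments resting on the coercivity of \eqref{cbound}, the compactness modes of Fact~\ref{f.2}, lower semicontinuity of the form (handled via the same decomposition $\omega = \ol\omega + (\omega-\ol\omega)$ as in Lemma~\ref{l.cbound}), and Lemma~\ref{l.one} for simplicity and strictness of the ordering. The only cosmetic difference is in extracting the Euler--Lagrange equation: the paper differentiates the normalized perturbation $f^\e = (f+\e\ph)/\norm{f+\e\ph}$ at $\e=0$ and then inductively restricts to the orthocomplement, whereas you invoke Lagrange multipliers and show they vanish by testing against $f_i$ and using symmetry of the form --- two presentations of the same standard constrained first-variation computation. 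Your final paragraph (ruling out missed pairs, deducing $\Lambda_k\to\infty$ from $L^2$-compactness) makes explicit some bookkeeping the paper compresses into ``apply the arguments of the previous paragraph in the ortho-complement,'' but the substance matches.
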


\begin{remark} Since we must have $\Lambda_k\to\infty$, essentially $\{\Lambda_0,\Lambda_1,\ldots\}$ exhausts the full spectrum and the operator has compact resolvent. We do not make this precise.

\end{remark}

\begin{proof} First taking $k = 0$, the infimum $\tilde\Lambda$ is finite by~\eqref{cbound}. Let $f_n$ be a minimizing sequence; it is $L^*$-bounded, again by~\eqref{cbound}. Pass to a subsequence converging to $f\in L^*$ in all the modes of Fact~\ref{f.2}. In particular $1 = \norm{f_n}\to\norm{f}$, so $\Hyw{f,f}\ge\tilde\Lambda$ by definition. But also
\[\begin{split}
\Hyw{f,f} &= \norm{f'}^2 + \int f^2\eta + \Ip{f,\ol \omega'f} + 2\Ip{f',(\ol \omega-\omega)f} + w f(0)^2
\\&\le\, \liminf_{n\to\infty}\,\Hyw{f_n,f_n}
\end{split}\]
by a term-by-term comparison. Indeed, the inequality holds for the first term by weak convergence, and for the second term by pointwise convergence and Fatou's lemma; the remaining terms are just equal to the corresponding limits, because the second members of the inner products converge in $L^2$ by the bounds from the proof of Lemma~\ref{l.cbound} together with $L^*$-boundedness and $L^2$-convergence.  Therefore $\Hyw{f,f} = \tilde\Lambda$.

A standard argument now shows $(\tilde\Lambda,f)$ is an eigenvalue-eigenfunction pair: taking $\ph\in C_0^\infty$ and $\e$ small, put $f^\e = (f+\e\ph)/{\norm {f+\e\ph}}$; since $f$ is a minimizer, $\left.\frac{d}{d\e}\right|_{\e=0}\Hyw{f^\e,f^\e}$ must vanish;
the latter says precisely~\eqref{ee} with $\tilde\Lambda$. Finally, suppose $(\Lambda,g)$ is any eigenvalue-eigenfunction pair; then $\Hyw{g,g} = \Lambda$, and hence $\tilde\Lambda\le\Lambda$. We are thus justified in setting $\Lambda_0 = \tilde\Lambda$ and $f_0=f$.

Proceed inductively, minimizing now over $\{f\in L^*: \norm{f}=1, f\perp f_0,\dots,f_{k-1}\}$.  Again, $L^2$-convergence of a minimizing sequence guarantees that the limit remains admissible; as before, the limit is in fact a minimizer; conclude by applying the arguments of the previous paragraph in the ortho-complement. The preceding lemma guarantees that $\Lambda_0<\Lambda_1<\cdots$, and that the corresponding eigenfunctions $f_0,f_1,\ldots$ are uniquely determined.
\end{proof}

\subsection{Statement}

We are finally ready to state the main result of this section.  When we speak of an \textbf{eigenvalue-eigenvector pair $(\lambda, v)$} of an $n\times n$ matrix, we take $v\in\R^n$ embedded in $L^2(\R_+)$ as usual and normalized by $\norm{v}=1$ and $-\frac\pi2<\arg(v_0,v_1)\le\frac\pi2$. 

\begin{theorem}\label{t.conv} Suppose that $H_n$ as in~\eqref{Hn} satisfies Assumptions 1--3 and let $(\lambda_{n,k},v_{n,k})$ be its $(k+1)$st lowest eigenvalue-eigenvector pair. Define the corresponding form $\Hc_{y,w}$ as in~\eqref{Hya} and let $(\Lambda_k,f_k)$ be its a.s.\ defined $(k+1)$st lowest eigenvalue-eigenfunction pair. Then, jointly for all $k = 0,1,\ldots$ in the sense of finite-dimensional distributions, we have $\lambda_{n,k}\Rightarrow\Lambda_k$ and $v_{n,k}\Rightarrow_{L^2} f_k$ as $n\to\infty$. The convergence holds jointly over different $w_n, w$ for given $y_n,y$.
\end{theorem}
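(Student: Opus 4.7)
The plan is to carry out the \citetalias{RRV}-style matching of discrete and continuous variational problems in the deterministic setting afforded by the Skorokhod construction in Subsection 2.2, with the essential new feature being the boundary term. For $v\in\R^n$ embedded as a step function with $v(0)=v_0$, one computes
\[
\ip{v, H_n v} \,=\, \norm{D_n v}^2 + \ip{v, V_n v} + w_n v(0)^2,
\]
the last scalar arising from the rank-one $w_n E_n$ because $\ip{v, E_n v}=v_0^2$ under this embedding. A discrete summation by parts on the middle term puts it in correspondence with $-\ip{(v^2)',y_n}$ up to negligible error, so the whole expression is a lattice discretization of $\Hyw{v,v}$.

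First I would prove the upper bound $\limsup\lambda_{n,k}\le\Lambda_k$ via Courant--Fischer. Take the first $k+1$ continuum eigenfunctions $f_0,\ldots,f_k$ from Proposition~\ref{p.var}, approximate each in $L^*$-norm by a $C_0^\infty$ function (possibly nonzero at the origin when $w<\infty$, or supported away from $0$ when $w=+\infty$), and discretize along $\Z_+/m_n$ to get vectors $v_{n,i}$. Assumption~1 together with $w_n\to w$ yields $\ip{v_{n,i}, H_n v_{n,j}}\to\Hyw{f_i,f_j}=\Lambda_i\delta_{ij}$ term by term; min--max applied to the $(k+1)$-dimensional span then gives the bound.

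For the lower bound and eigenvector convergence, a discrete analogue of Lemma~\ref{l.cbound}, derived from Assumption~2 and the decomposition $y_n=\int\eta_n+\omega_n$, shows that the matrix eigenvectors $v_{n,0},\ldots,v_{n,k}$ are $L^*$-bounded (their Rayleigh quotients being at most $\Lambda_k+o(1)$ by the upper bound). Pass to a subsequence with limits $f_j^*$ in weak-$L^*$, $L^2$, and uniformly on compacts; the last mode gives $v_{n,j}(0)\to f_j^*(0)$, and $L^2$-convergence preserves orthonormality. The term-by-term lower semicontinuity argument from the proof of Proposition~\ref{p.var} then yields $\Hyw{f_j^*,f_j^*}\le\liminf\lambda_{n,j}$, and the variational characterization forces $\Lambda_k\le\liminf\lambda_{n,k}$, matching the upper bound. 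In the Dirichlet case $w=+\infty$, any such subsequence automatically satisfies $v_{n,j}(0)\to 0$, since otherwise $w_n v_{n,j}(0)^2\to\infty$ would contradict boundedness of $\lambda_{n,j}$; hence $f_j^*(0)=0$ as required. Uniqueness from Lemma~\ref{l.one} identifies $f_j^*=\pm f_j$, with sign fixed by the normalization on $\arg(v_0,v_1)$, giving $L^2$-convergence of eigenvectors. Joint convergence over $w$ for fixed $y_n,y$ is then automatic: the construction is pathwise and the matrices share $y_n$.

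The main obstacle I expect is the discrete-to-continuous translation of the potential term---controlling $\ip{v_n, V_n v_n}\to -\ip{(f^2)',y}$ when $v_n\to f$ only weakly in $L^*$---which requires shifting the discrete derivative of $y_n$ onto the oscillating vector by summation by parts and absorbing the resulting error using the moving-average trick and oscillation bound~\eqref{a23} from the proof of Lemma~\ref{l.cbound}. The boundary scalar itself is comparatively easy once uniform convergence on a neighborhood of $0$ is secured, the one remaining subtlety being the Dirichlet limit: the divergent $w_n$ must be tamed by forcing $v_{n,k}(0)\to 0$ via the eigenvalue bound rather than by approximation, since the upper-bound trial functions in that case vanish near $0$ while matrix eigenvectors a priori do not.
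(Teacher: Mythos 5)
Your proposal shares the variational skeleton of the paper's proof but departs in both halves, and the lower bound as written has a gap. The paper establishes $\liminf\lambda_{n,k}\ge\Lambda_k$ by passing the weak eigenvalue equation $\ip{\ph,H_n v_{n,j}}=\lambda_{n,j}\ip{\ph,v_{n,j}}$ to the limit via Lemma~\ref{l.conv} (testing against fixed smooth $\ph$), concluding directly that each subsequential limit $f_j^*$ is an eigenfunction; the $f_j^*$ being orthonormal, the limiting eigenvalues are distinct and there are $k+1$ of them below $\liminf\lambda_{n,k}$. You instead assert quadratic-form lower semicontinuity $\Hyw{f_j^*,f_j^*}\le\liminf\lambda_{n,j}$ along the Lemma~\ref{l.dc} convergence, and then claim ``the variational characterization forces $\Lambda_k\le\liminf\lambda_{n,k}$.'' That step is a non-sequitur as stated: knowing that the \emph{diagonal} entries $\Hyw{f_j^*,f_j^*}$ of the form restricted to the span of the orthonormal $f_j^*$ are bounded does not bound $\max_{f\in\mathrm{span}}\Hyw{f,f}$, since off-diagonal form entries can push the max Rayleigh quotient up, so Courant--Fischer does not apply directly. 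What does work is an induction that you only gesture at afterward: the upper bound gives $\Hyw{f_0^*,f_0^*}\le\Lambda_0$, hence equality, hence $f_0^*$ is a minimizer and so (by the argument in Proposition~\ref{p.var} plus Lemma~\ref{l.one}) equals $\pm f_0$; then $f_1^*\perp f_0$ forces $\Hyw{f_1^*,f_1^*}\ge\Lambda_1$, giving equality and $f_1^*=\pm f_1$; and so on. You invoke Lemma~\ref{l.one} only after already claiming the eigenvalue conclusion, which is circular. Note also that the lower-semicontinuity fact itself is not the same as what the paper proves; adapting the continuum argument of Proposition~\ref{p.var} to the discrete-to-continuous limit (handling $V_n$ via the $\eta_n,\omega_n$ split and the moving-average trick) requires roughly the same work as the paper's Lemma~\ref{l.conv}, so this route is not really cheaper.

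On the other side, your upper bound uses the $(k+1)$-dimensional span of discretized continuum eigenfunctions and min--max, rather than the paper's inductive trial vector $\Pc_n f_k^\e-\sum_{j<k}\ip{v_{n,j},\Pc_n f_k^\e}v_{n,j}$; this avoids needing the conclusion for indices $<k$, at the modest cost of also controlling the Gram matrix of the approximants, and is correct. Your Dirichlet-case observation---that $w_n v_{n,j}(0)^2$ must stay bounded to keep $\lambda_{n,j}$ bounded, so $v_{n,j}(0)\to 0$ when $w_n\to\infty$---is also right; the paper encodes the same control into the $L^*_n$-norm via Lemmas~\ref{l.tight} and~\ref{l.dc}. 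With the lower-bound argument reorganized into the induction described above (or replaced by the paper's weak-equation route), the proposal is sound.
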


\begin{remark} Essentially, the resolvent matrices (precomposed with the corresponding finite-rank projections) are converging to the continuum resolvent in $L^2$-operator norm.  We do not define the resolvent operator here.
\end{remark}

The proof will be given over the course of the next two subsections. Recall that we proceed in the subsequential almost-sure context of the previous subsection.

\subsection{Tightness}

We will need a discrete analogue of the $L^*$-norm and a counterpart of Lemma~\ref{l.cbound} with constants uniform in $n$. For $v\in \R^n$, define the \textbf{$L^*_n$-norm} by
\[
\norm{v}_{*n}^2 \,=\,\begin{cases}\Norm{D_n v}^2 + \Norm{v\weight}^2&\text{if }w<\infty,\\
\Norm{D_n v}^2 + \Norm{v\weight}^2+ w_n v_0^2&\text{if }w=\infty,\end{cases}
\]
noting that the additional term in the Dirichlet case is nonnegative for sufficiently large $n$.

\begin{remark} As in the continuum version, the Dirichlet boundary condition must be put explicitly into the norm (see also Lemma~\ref{l.dc} below). The case considered in \citetalias{RRV} has $w_n = m_n$ in our notation; though it is somewhat hidden in the definitions, the $L^*_n$-norm used there contains a term $m_n v_0^2$.
\end{remark}	

\begin{lemma}\label{l.tight} There are constants\/ $c,C>0$ so that, for each $n$ and all\/ $v\in\R^n$,
\begin{equation}\label{tight}
c\norm{v}_{*n}^2-C\norm{v}^2 \le \ip{v,H_n v} \le C\norm{v}_{*n}^2.
\end{equation}
\end{lemma}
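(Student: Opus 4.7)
The plan is to mirror Lemma~\ref{l.cbound} in the discrete setting. Begin with the decomposition
$$\ip{v,H_n v} = \Norm{D_n v}^2 + \ip{v,V_n v} + w_n v_0^2$$
and expand the potential term using the explicit form of $V_n$: $\ip{v,V_n v} = \sum_j(y_{n,1;j+1}-y_{n,1;j})v_j^2 + \sum_j(y_{n,2;j+1}-y_{n,2;j})v_j v_{j+1}$. After substituting $y_{n,i;j+1}-y_{n,i;j} = m_n^{-1}\eta_{n,i;j}+(\omega_{n,i;j+1}-\omega_{n,i;j})$, this splits into an $\eta$-part and an $\omega$-part.

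The $\omega$-part is handled verbatim by the moving-average trick of Lemma~\ref{l.cbound}: write $\omega_{n,i} = \ol\omega_{n,i}+(\omega_{n,i}-\ol\omega_{n,i})$ with $\ol\omega_{n,i}(x)=\int_x^{x+1}\omega_{n,i}$ and apply~\eqref{a23} to bound each piece absolutely by $\e\Norm{v}_{*n}^2+C_\e\Norm{v}^2$ for arbitrary small $\e>0$. The boundary term $w_n v_0^2$ is a component of $\Norm{v}_{*n}^2$ in the Dirichlet case; otherwise $\abs{w_n}$ stays bounded and the discrete trace bound $v_0^2 = \sum_j(v_j^2-v_{j+1}^2) \le 2\Norm{D_n v}\Norm{v}$ absorbs $\abs{w_n}v_0^2$ into $\e\Norm{D_n v}^2+C_\e\Norm{v}^2$.

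The essential new step is bounding the $\eta$-part, which combines $\int\eta_{n,1}v^2$ with the off-diagonal cross sum $m_n^{-1}\sum\eta_{n,2;j}v_jv_{j+1}$. Under~\eqref{a22} this cross sum can in principle be as negative as $-\Norm{D_n v}^2$, threatening the $c\Norm{D_n v}^2$ contribution to the lower bound. The resolution is a sharp elementary identity: for $\alpha\in[0,2]$ and all $a,b\in\R$,
$$a^2+b^2-(2-\alpha)ab \,\ge\, \tfrac{1}{4}(a-b)^2+\tfrac{\alpha}{4}(a^2+b^2),$$
verified by checking that the difference $\tfrac{3-\alpha}{4}(a^2+b^2)-\tfrac{3-2\alpha}{2}ab$ is positive semidefinite on $[0,2]$. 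Applying this with $\alpha_j=\eta_{n,2;j}/m_n^2\in[0,2]$ by~\eqref{a22} and $(a,b)=(v_j,v_{j+1})$ gives
$$\Norm{D_n v}^2 + m_n^{-1}\sum_j\eta_{n,2;j}v_jv_{j+1} \,\ge\, \tfrac{1}{4}\Norm{D_n v}^2 + \tfrac{1}{2}\int\tilde\eta_{n,2}\,v^2,$$
where $\tilde\eta_{n,2;j}=\tfrac{1}{2}(\eta_{n,2;j}+\eta_{n,2;j-1})$. Adding $\int\eta_{n,1}v^2$ and using the pointwise bound $\eta_{n,1;j}+\tfrac{1}{2}\tilde\eta_{n,2;j}\ge\tfrac{1}{4}(\eta_{n,1;j}+\eta_{n,2;j})$ together with~\eqref{a21} produces $\ge\tfrac{1}{4\kappa}\Norm{v\weight}^2-C\Norm{v}^2$.

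Combining these bounds and choosing $\e$ small yields the lower bound $c\Norm{v}_{*n}^2-C\Norm{v}^2$; the upper bound follows from the same estimates used one-sidedly (and without the algebraic identity, since $\abs{\sum\eta_{n,2;j}v_jv_{j+1}}$ is easily dominated by $\int\eta_{n,2}v^2$ plus $\Norm{D_n v}^2$ via~\eqref{a22}). The main obstacle is the sharpness of the algebraic identity, which uses precisely the saturation bound $\alpha_j\le 2$ from~\eqref{a22} to extract a net $\tfrac{1}{4}\Norm{D_n v}^2$---a purely discrete phenomenon, since in the continuum integration by parts cleanly separates diagonal and off-diagonal contributions.
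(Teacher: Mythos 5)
Your proof is correct. The paper's own proof is brief: it defers the derivative and potential terms to the proof of Lemma~5.6 of \citetalias{RRV}, and only supplies the new argument for the spike term $w_n v_0^2$, namely boundedness of $w_n$ (Assumption~3), the discrete telescoping bound $v_0^2 = \ip{-D_n v^2,\1} \le 2\norm{D_n v}\norm{v}$, and a one-line remark about the Dirichlet case. Your treatment of the spike term is the same. What you have added is an explicit reconstruction of the \citetalias{RRV} portion: the splitting into $\eta$- and $\omega$-parts, the discrete moving-average handling of the oscillation, and the key positivity estimate combining the discrete Laplacian with the off-diagonal $\eta_{n,2}$ contribution. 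Your elementary inequality $a^2+b^2-(2-\alpha)ab \ge \tfrac14(a-b)^2+\tfrac{\alpha}{4}(a^2+b^2)$ for $\alpha\in[0,2]$ is checked correctly and does the job cleanly---it retains a fixed fraction of both the derivative and the weight term uniformly in $\alpha$, so no case distinction on the size of $\eta_{n,2;j}$ is needed. Whether this exactly matches the manipulation in \citetalias{RRV} is a cosmetic question, but the strategy (bound the cross term by sacrificing a controlled fraction of $\Norm{D_n v}^2$, made possible precisely by \eqref{a22}) is the same one the cited proof relies on, and the spike term is handled identically to the paper. The one place you are slightly terse is the Dirichlet case: you should note, as the paper does, that one first proves \eqref{tight} with the spike term removed from both the form and the $L^*_n$-norm, and then adds it back by taking $c\le 1\le C$, since $w_n v_0^2\ge 0$ for large $n$ appears on both sides.
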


\begin{proof} The derivative and potential terms may be handled exactly as in \citetalias{RRV} (proof of Lemma 5.6). For the spike term $w_n v_0^2$ we recall Assumption 3.  In the $w<\infty$ case the $w_n$ are bounded, so it suffices to obtain a bound of the form $v_0^2 \le \e\norm{v}_{*n}^2+C_\e\norm{v}^2$ for each $\e>0$ where $\e,C_\e$ do not depend on $n$. Mimicking the continuum version in the proof of Fact~\ref{f.1}, we have
\[
v_0^2 = \ip{-D_n v^2,\1}
= \ip{-(D_n v)(T_n v + v),\1}
\le \ip{-(D_n v),T_n v + v}
\le 2\norm{D_n v}\norm{v},
\]
which gives the desired bound with $C_\e = 1/\e$.

In the Dirichlet case, start with \eqref{tight} but with the spike term left out (both of the form and the norm); it can be easily added back in by simply ensuring that $c\le 1$ and $C\ge 1$.
\end{proof}

\begin{remark}If $w_n\to -\infty$ then the lower bound in Lemma~\ref{l.tight} breaks down: the lowest eigenvalue of $H_n$ really is going to $-\infty$. This is the supercritical regime.
\end{remark}

\subsection{Convergence}

We begin with a lemma, a discrete-to-continuous version of Fact \ref{f.2}.

\begin{lemma}\label{l.dc}
Let $f_n\in \R^n$ with $\norm{f_n}_{*n}$ uniformly bounded. Then there exist $f\in L^*$ and a subsequence along which (i) $f_n\to f$ uniformly on compacts, (ii) $f_n\to_{L^2} f$, and (iii) $D_n f_n\to f'$ weakly in $L^2$.
\end{lemma}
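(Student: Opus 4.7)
The plan is to mimic the proof of Fact~\ref{f.2} after replacing the step-function embedding $f_n$ with its polygonalization $\tilde f_n$, the continuous piecewise-linear function on $\R_+$ that agrees with $f_n$ at every grid point $j/m_n$. Its derivative $\tilde f_n'$ coincides a.e.\ with the step function $D_n f_n$, so $\Norm{\tilde f_n'} = \Norm{D_n f_n}$ is uniformly bounded; a one-interval Cauchy-Schwarz on a single grid cell also yields $\abs{f_{n,j+1}-f_{n,j}}\le \Norm{D_n f_n}\,m_n^{-1/2}$, so $\tilde f_n$ and $f_n$ agree within $O(m_n^{-1/2})$ uniformly and any convergence mode can be transferred between them.

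For~(i), Cauchy-Schwarz on $\tilde f_n'=D_n f_n$ gives the uniform H\"older$(1/2)$ estimate $\abs{\tilde f_n(x)-\tilde f_n(y)}\le\Norm{D_n f_n}\abs{x-y}^{1/2}$, and the discrete pointwise bound $v_0^2\le 2\Norm{D_n v}\Norm{v}$ used in Lemma~\ref{l.tight} (mirroring Fact~\ref{f.1}) controls $\abs{\tilde f_n(0)}$. Since $\Norm{f_n}$ is bounded by the weighted $L^2$ term in $\Norm{f_n}_{*n}$, the two estimates combine to make $\{\tilde f_n\}$ locally uniformly bounded and equicontinuous. Arzel\`a-Ascoli yields a subsequence with $\tilde f_n\to f$ uniformly on compacts for a continuous $f$, and the uniform $O(m_n^{-1/2})$ gap transfers this to $f_n\to f$ uniformly on compacts.

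For~(iii), Banach-Alaoglu extracts a further subsequence with $D_n f_n\rightharpoonup g$ weakly in $L^2$. Testing against $\1_{[0,x]}$ gives $\tilde f_n(x)-\tilde f_n(0)=\int_0^x D_n f_n\to\int_0^x g$; matching this with the pointwise convergence of $\tilde f_n$ identifies $f$ as absolutely continuous with $f'=g$. Membership $f\in L^*$ then follows from weak-$L^2$ lower semicontinuity applied to $\Norm{D_n f_n}$ and from Fatou's lemma (using pointwise convergence $f_n\to f$ a.e.) applied to $\Norm{f_n\weight}$. In the Dirichlet case the additional nonnegative term $w_n f_{n,0}^2$ in $\Norm{f_n}_{*n}^2$ being bounded while $w_n\to\infty$ forces $f_{n,0}\to 0$, hence $f(0)=0$.

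For~(ii), $L^2$ convergence follows from the local $L^2$ statement of~(i) combined with the uniform tail control supplied by $\Norm{f_n\weight}$ bounded together with $\ol\eta\to\infty$ (yielding uniform integrability of $f_n^2$). The only real technical point is setting up the polygonalization so that the derivative and the pointwise comparisons are both controlled by the single $L^*_n$-norm bound; once that is arranged, the argument is a discrete-to-continuum transcription of Fact~\ref{f.2}.
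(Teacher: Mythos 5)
Your proposal is correct and takes essentially the same route as the paper: both pass to the piecewise-linear interpolant $g_n$, control its $L^*$-type norm by $\norm{f_n}_{*n}$, extract a convergent subsequence via the compactness in Fact~\ref{f.2} (which you re-derive inline rather than cite), handle the Dirichlet case via the extra $w_n f_{n,0}^2$ term, and transfer the conclusions back to $f_n$ using the uniform $O(m_n^{-1/2})$ gap between $f_n$ and $g_n$.
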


\begin{proof}
Consider $g_n(x) = f_n(0) + \int_0^x D_n f_n$, a piecewise-linear version of $f_n$; they coincide at points $x = i/m_n$, $i\in\Z_+$. One easily checks that $\norm{g_n}^2_*\le 2\norm{f_n}^2_{*n}$, so some subsequence $g_n\to f\in L^*$ in all the modes of Fact~\ref{f.2}; in the Dirichlet case, the extra term in the $L^*_n$ norm guarantees that $f(0)=0$. But then also $f_n\to f$ compact-uniformly by a simple argument using the uniform continuity of $f$, $f_n\to_{L^2} f$ because $\norm{f_n-g_n}^2\le(1/3n^2)\norm{D_n f_n}^2$, and $D_n f_n\to f'$ weakly in $L^2$ because $D_n f_n = g_n'$ a.e.
\end{proof}

Next we establish a kind of weak convergence of the form $\ip{\cdot,H_n\cdot}$ to $\Hyw{\cdot,\cdot}$. Let $\Pc_n$ be orthogonal projection from $L^2$ onto $\R^n$. One can check the following: for $f\in L^2$, $\Pc_n f \to_{L^2} f$ (the Lebesgue differentiation theorem gives pointwise convergence and we have uniform $L^2$-integrability); for smooth $f$, $\Pc_n f \to f$ uniformly on compacts; further, if $f'\in L^2$ then $D_n f\to_{L^2} f'$ ($D_n f$ is a convolution of $f'$ with an approximate delta). Observe that $\Pc_n$ commutes with $R_n$ and with $D_n R_n$.

\begin{lemma}\label{l.conv} Let $f_n\to f$ be as in the hypothesis and conclusion of Lemma~\ref{l.dc}. Then for all $\ph\in C_0^\infty$ we have $\ip{\ph,H_n f_n}\to \Hyw{\ph,f}$. In particular, $\Pc_n\ph\to\ph$ in this way and so
\begin{equation}\label{conv}
\ip{\Pc_n\ph,H_n\Pc_n \ph} = \ip{\ph,H_n\Pc_n \ph}\to \Hyw{\ph,\ph}.
\end{equation}
\end{lemma}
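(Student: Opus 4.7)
The plan is to expand $\ip{\ph, H_n f_n}$ according to \eqref{Hn} into kinetic, potential, and spike pieces and match them one-by-one with the three summands of $\Hyw{\ph, f} = \ip{\ph', f'} - \ip{(\ph f)', y} + w\ph(0)f(0)$. Since $\ph \in C_0^\infty$ is compactly supported, for all large $n$ the cutoff $R_n$ acts as the identity on $\ph$ and may be dropped, so the task reduces to analyzing $\ip{D_n\ph, D_n f_n} + \ip{\ph, V_n f_n} + w_n\ip{\ph, E_n f_n}$.

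For the kinetic piece, transfer $D_n^\dag$ onto $\ph$ and combine the strong convergence $D_n\ph\to\ph'$ in $L^2$ (from the preamble) with the weak convergence $D_n f_n\to f'$ in $L^2$ (Lemma \ref{l.dc}(iii)) to get $\ip{\ph', f'}$. For the potential piece, unpack
\[
\ip{\ph, V_n f_n} = \ip{\ph f_n, D_n y_{n,1}} + \tfrac{1}{2}\ip{\ph\,T_n f_n + T_n\ph\,f_n, D_n y_{n,2}}.
\]
Because $\ph$ has compact support and $y_{n,i;0} = 0$, all boundary terms in the discrete Abel summation vanish and one has the clean identity $\ip{g, D_n y_{n,i}} = -\ip{D_n g, T_n y_{n,i}}$. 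Apply this to each of the three terms and expand with the discrete Leibniz rule $D_n(gh) = (D_n g)(T_n h) + g\,D_n h$. The resulting combination $\Phi_{n,i}$ is a sum of products in which each $\ph$-factor ($D_n\ph$, $\ph$, $T_n\ph$, $T_n D_n\ph$) converges uniformly on $\supp\ph$ to $\ph'$ or $\ph$, and each $f_n$-factor is either $f_n$, $T_n f_n$, or $T_n^2 f_n$ (all converging to $f$ in $L^2$ by Lemma \ref{l.dc}) or $D_n f_n$, $T_n D_n f_n$ (both converging weakly in $L^2$ to $f'$). A term-by-term inspection shows the strong/weak pairings assemble to $\Phi_{n,i}\to\ph'f + \ph f' = (\ph f)'$ weakly in $L^2$; pairing against $T_n y_{n,i}\to y_i$ uniformly on the compact $\supp\ph$ gives $\ip{\ph, V_n f_n} \to -\ip{(\ph f)', y_1+y_2} = -\ip{(\ph f)', y}$. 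For the spike piece, $E_n f_n = m_n f_{n,0}\1_{[0,1/m_n)}$, hence $w_n\ip{\ph, E_n f_n} = w_n f_{n,0}\cdot m_n\int_0^{1/m_n}\ph$: for finite $w$, Assumption 3 and Lemma \ref{l.dc} identify the limit as $w\,f(0)\,\ph(0)$, while in the Dirichlet case $\ph$ vanishes near $0$ so the term is identically zero for large $n$, which matches $\Hyw{\cdot,\cdot}$ dropping its boundary term.

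The ``in particular'' clause reduces to verifying that $f_n = \Pc_n\ph$ fits Lemma \ref{l.dc} with limit $\ph$: smoothness yields uniform-on-compacts convergence $\Pc_n\ph\to\ph$ and $L^2$-convergence $D_n\Pc_n\ph\to\ph'$, hence $\norm{\Pc_n\ph}_{*n}$ is uniformly bounded; in the Dirichlet case $(\Pc_n\ph)_0 = 0$ for large $n$ because $\ph$ vanishes near the origin. The first part of the lemma then yields the second. The main obstacle is the bookkeeping in the potential term: the symmetrized form of $V_n$ produces several intertwined shifted products, and one must organize the summation-by-parts together with the Leibniz expansion so that the sole weakly converging factor $D_n f_n$ always meets a strongly converging partner and the unit-lattice shifts from $T_n$ cancel cleanly, delivering exactly one copy of $(\ph f)'$ rather than an off-by-one residual.
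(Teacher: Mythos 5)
Your proof is correct and takes essentially the same approach as the paper: the same decomposition into kinetic, potential and spike pieces, the same strong/weak pairing for the kinetic term, and the same treatment of the spike and of the ``in particular'' clause. The only genuine difference is that you spell out the summation-by-parts plus discrete Leibniz bookkeeping for the potential term, where the paper simply cites the unperturbed computation in \citetalias{RRV} (proof of Lemma~5.7); the details you provide are accurate and confirm the limit $-\ip{(\ph f)', y}$.
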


\begin{proof} Note that if $f_n\to_{L^2} f$, $g_n$ is $L^2$-bounded and $g_n\to g$ weakly  in $L^2$, then $\ip{f_n,g_n}\to\ip{f,g}$. Therefore $\ip{\ph,D_n^\dag D_n f_n}=\ip{D_n\ph,D_n f_n}\to\ip{\ph',f'}$. The potential term converges as in \citetalias{RRV} (proof of Lemma 5.7). Moreover, the spike term converges to the boundary term:
\[
w_n f_n(0)(\Pc_n\ph)(0) \to w\,f(0)\ph(0),
\]
where in the Dirichlet case the left side vanishes for $n$ large because $\ph$ is supported away from 0.

For the second statement, the uniform $L^*_n$ bound follows from the following observations: $\Norm{(\Pc_n \ph)\weight}=\norm{\Pc_n\ph\weight}\le\norm{\ph\weight}$; for $n$ large enough that $R_n\ph=\ph$ we have $\norm{D_n\Pc_n \ph}=\norm{\Pc_n D_n\ph}\le\norm{D_n\ph}\le\norm{\ph'}$ (Young's inequality); and in the Dirichlet case, the extra term vanishes for $n$ large. The convergence is easy: $\Pc_n\ph\to\ph$ compact-uniformly and in $L^2$, and for $g\in L^2$ we have $\ip{g,D_n\Pc_n\ph}=\ip{\Pc_n g, D_n\ph}\to\ip{g,\ph'}.$
\end{proof}

Finally, we recall the argument of \citetalias{RRV} to put all the pieces together.

\begin{proof}[Proof of Theorem~\ref{t.conv}] First we show that for all $k$ we have $\underline\lambda_k=\liminf\lambda_{n,k}\ge\Lambda_k$. Assume that $\underline\lambda_k<\infty$. The eigenvalues of $H_n$ are uniformly bounded below by Lemma~\ref{l.tight}, so there is a subsequence along which $(\lambda_{n,1},\dots,\lambda_{n,k})\to(\xi_1,\dots,\xi_k = \underline\lambda_k)$. By the same lemma the corresponding eigenvector sequences have $L^*_n$-norm uniformly bounded; pass to a further subsequence so that they all converge as in Lemma~\ref{l.dc}. The limit functions are orthonormal, and by Lemma~\ref{l.conv} they are eigenfunctions with eigenvalues $\xi_k$. There are therefore $k$ distinct eigenvalues at most $\underline\lambda_k$, as required.

We proceed by induction, assuming the conclusion of the theorem up to $k-1$. First find $f_k^\e\in C_0^\infty$ with $\norm{f_k^\e - f_k}_*<\e$. Consider the vector
\[
f_{n,k} = \Pc_n f_k^\e - \sum_{j=0}^{k-1}\ip{v_{n,j},\Pc_n f_k^\e}v_{n,j}.
\]
The $L^*_n$-norm of the sum term is uniformly bounded by $C\e$: indeed, the $\norm{v_{n,j}}_{*n}$ are uniformly bounded by Lemma~\ref{l.tight}, while the coefficients satisfy $\abs{\ip{v_{n,j},f_k^\e}}\le\norm{f_k^\e-f_k}+\norm{v_{n,j}-f_j}<2\e$ for large $n$. By the variational characterization in finite dimensions, and the uniform $L^*_n$ form bound on $\ip{\cdot,H_n\cdot}$ (Lemma~\ref{l.tight}) together with the uniform bound on $\norm{\Pc_n f_k^\e}_{*n}$ (Lemma~\ref{l.conv}), we then have
\begin{equation}\label{limsup}
\limsup\lambda_{n,k}\,\le\,\limsup\frac{\ip{f_{n,k},H_n f_{n,k}}}{\ip{f_{n,k},f_{n,k}}}\,=\,\limsup\frac{\ip{\Pc_n f_k^\e,H_n\Pc_n f_k^\e}}{\ip{\Pc_n f_k^\e,\Pc_n f_k^\e}}+o_\e(1),
\end{equation}
where $o_\e(1)\to0$ as $\e\to0$. But~\eqref{conv} of Lemma~\ref{l.conv} provides $\lim \ip{\Pc_n f_k^\e,H_n\Pc_n f_k^\e} = \Hyw{f_k^\e,f_k^\e}$,
so the right hand side of~\eqref{limsup} is
\[
\frac{\Hyw{f_k^\e,f_k^\e}}{\ip{f_k^\e,f_k^\e}} + o_\e(1) \,=\, \frac{\Hyw{f_k,f_k}}{\ip{f_k,f_k}} +o_\e(1) \,=\, \Lambda_k + o_\e(1).
\]
Now letting $\e\to 0$, we conclude $\limsup\lambda_{n,k}\le\Lambda_k$.

Thus $\lambda_{n,k}\to\Lambda_k$; Lemmas~\ref{l.tight} and~\ref{l.conv} imply that any subsequence of the $v_{n,k}$ has a further subsequence converging in $L^2$ to some $g\in L^*$ with $(\Lambda_k,g)$ an eigenvalue-eigenfunction pair. But then $g = f_k$, and so $v_{n,k}\to_{L^2} f_k$.
\end{proof}

\section{Application to Wishart and Gaussian models}\label{s.3}

We now apply Theorem~\ref{t.conv} to prove Theorems~\ref{t.W} and~\ref{t.G}. The first step is to obtain the tridiagonal forms.  Then, after recalling the derivation of the scaling limit at the soft edge, we verify Assumptions 1--3 for certain scalings of the perturbation.

\subsection{Tridiagonalization}

We explain how to tridiagonalize a rank one spiked real Wishart matrix; the algorithm is basically the usual one described by \citet{Trotter} with a few careful choices. We restrict for the moment to the case $n\ge p$, but lift this restriction in the Remark~\ref{r.duality} below.  For a given $p\times n$ data matrix $X$ we will construct a pair of orthogonal matrices $O\in O(p),\ O'\in O(n)$ so that $W = OXO'$ becomes lower bidiagonal; then $X$ and $W$ have the same singular values and $WW^\dag$ is a symmetric tridiagonal matrix with the same eigenvalues as $XX^\dag$.  Further, the structure of $X$ and $O,O'$ will be such that the entries of $W$ are independent with explicit known distributions.

We build up $O$ and $O'$ as follows.  Let $e_1,\dots, e_p\in\R^p$ be the standard basis of column vectors and $\tilde e_1,\dots,\tilde e_n\in\R^n$ the standard basis of row vectors.
\begin{itemize}\itemsep 0pt
\item First, reflect (or rotate) the top row of $X$ into the positive $\tilde e_1$ direction via right multiplication by $O_1'\in O(n)$, chosen independently of the other rows. This row becomes $\sqrt{\ell}\,\wt\chi_n\tilde e_1$, where $\wt\chi_n$ is a $\mathrm{Chi}(n)$ random variable (i.e.\ distributed as the length of an $n$-dimensional standard normal vector); the other rows remain independent standard normal vectors, since their distribution is invariant under an independent reflection.

\item Next, reflect the first column of $XO_1'$ as follows: leaving $\langle e_1\rangle$ invariant, reflect the orthogonal $\langle e_2,\dots,e_p\rangle$ component of the column into the positive $e_2$ direction via left multiplication by $O_1\in\{I_1\}\oplus O(p-1)$, chosen independently of the other columns. This component of the column becomes $\chi_{p-1}e_2$ where $\chi_{p-1}\sim \mathrm{Chi}(p-1)$, independent of $\wt\chi_n$. The same components of the other columns remain independent standard normal vectors, while the first row is untouched.

\item Reflect the second row of $O_1XO_1'$ as follows: leaving $\langle \tilde e_1\rangle$ invariant, reflect the orthogonal component of the row into the positive $\tilde e_2$ direction via right multiplication by $O_2'\in\{I_1\}\oplus O(n-1)$, chosen independently of the other rows.

\item Reflect the second column of $O_1XO_1'O_2'$ as follows: leaving $\langle e_1, e_2\rangle$ invariant, reflect the orthogonal component of the column into the positive $e_3$ direction via left multiplication by $O_2\in\{I_2\}\oplus O(p-2)$, chosen independently of the other columns.

\item Continue in this way, alternately reflecting rows and columns while leaving the results of previous steps untouched.
\end{itemize}
The result is that with $O' = O_1'\cdots O_p'$ and $O = O_{p-1}\cdots O_1$ we have
\begin{equation*}
W = OXO' = \begin{bmatrix}
\sqrt{\ell}\wt\chi_{n}
\\ \chi_{p-1} & \wt\chi_{n-1}
\\ & \ddots & \ddots
\\ & & \chi_2 & \wt\chi_{n-p+2}
\\ & & & \chi_1 & \wt\chi_{n-p+1}
\end{bmatrix},
\end{equation*}
where $\{\wt\chi_{n-j}\}_{j=0}^{p-1}$ and $\{\chi_{p-j}\}_{j=1}^{p-1}$ are independent Chi random variables of parameters given by their indices. We have truncated the $n - p$ rightmost columns of zeros to obtain a $p\times p$ matrix, leaving the product $WW^\dag$ unchanged. We will actually work with $W^\dag W$ below, which has the same eigenvalues.

\begin{remark}\label{r.duality}
%In the null case there is no loss of generality in taking $N\geq n$ because of the duality between $N$ and $n$ ($XX^\dag$ and $X^\dag X$ have the same nonzero eigenvalues). In the spiked case, however, we have to be a bit careful: $X^\dag$ has its first column spiked, not its first row, and the procedure above would need to be modified.
Attempting the above procedure in the case $n<p$ produces a lower bidiagonal matrix $W$ with $n+1$ nonzero rows.  The matrix $W^\dag W$ is now $n\times n$, has the same \emph{nonzero} eigenvalues as $XX^\dag$, and looks just like it does in the $n\ge p$ case except for a discrepancy in the bottom-right corner. The two cases may in fact be unified if one agrees that $\chi_0 = 0$; then $W$ is $(n\wedge p+1)\times(n\wedge p)$ and has the form~\eqref{Wb} with $\beta = 1$, while $W^\dag W$ is $(n\wedge p)\times(n\wedge p)$.
\end{remark}

The same algorithm will tridiagonalize a rank one spiked complex (resp.\ quaternionic) Wishart matrix by unitary (resp.\ symplectic or hyperunitary) conjugations. The lower bidiagonal matrix will be $W^{\beta,\ell}_{n, p}$ from~\eqref{Wb} with $\beta = 2$ (resp.\ 4).

The perturbed GOE/GUE/GSE ensembles are even easier to tridiagonalize; as in the Wishart case, the usual procedure of \citet{Trotter} works without modification. Starting with an $n\times n$ GOE matrix $M$ with a perturbation in the (1,1) entry, the upshot is that for certain $O_1,\dots, O_{n-1}$ with $O_j\in\{I_j\}\oplus O(n-j)$ the conjugated matrix $O_{n-1}\cdots O_1 M O_1^\dag\cdots O_{n-1}^\dag$ has the form~\eqref{Gb} with $\beta=1$. We do not detail it further here.

\subsection{Scaling limit}

Consider the $\ell$-spiked $\beta$-Laguerre ensemble $S = W^\dag W$ with $W = W_{n,p}=W^{\beta,\ell_{n,p}}_{n, p}$ as in~\eqref{Wb}, recalling that $S_{n,p}$ is $(n\wedge p)\times(n\wedge p)$. The diagonal and off-diagonal processes of $\beta S$ are
\begin{gather*}\ell_{n,p}\wt\chi_{\beta n}^2+\chi_{\beta(p-1)}^2,\quad\wt\chi_{\beta(n-1)}^2+\chi_{\beta(p-2)}^2,\quad\wt\chi_{\beta(n-2)}^2+\chi_{\beta(p-3)}^2,\quad\ldots
\\ \wt\chi_{\beta(n-1)}\chi_{\beta(p-1)},\quad\wt\chi_{\beta(n-2)}\chi_{\beta(p-2)},\quad\ldots
\end{gather*}
respectively. The usual centering and rescaling for fluctuations at the soft edge---as well as the operator limit itself---can be predicted using the approximations
\begin{equation*}
\chi_k\,\approx\,\sqrt{k}+\sqrt{1/2}\,g, \qquad\chi_k^2\,\approx\,k + \sqrt{2k}\,g,
\end{equation*}
valid for $k$ large, where $g$ is a suitably coupled standard Gaussian. We briefly reproduce the heuristic argument.

To leading order, the top-left corner of $S$ has $n+p$ on the diagonal and $\sqrt{np}$ on the off-diagonal. So the top-left corner of
\begin{equation*}
\frac{1}{\sqrt{np}}\Bigl(S - \bigl(\sqrt{n} + \sqrt{p}\bigr)^2 I\Bigr)
\end{equation*}
is approximately an unscaled discrete Laplacian. If time is scaled by $m^{-1}$, space has to be scaled by $m^2$ for this to converge to $\frac{d^2}{dx^2}$.  The next order terms for the $j$'th diagonal and off-diagonal entries of $S$, where $j\ll n\wedge p$, are respectively
\begin{gather*}
\tfrac{1}{\sqrt\beta}\bigl(\sqrt{2 n}\,\wt g_{n-j+1} + \sqrt{2p}\,g_{p-j} - 2j\bigr),
\\ \tfrac{1}{\sqrt\beta}\bigl(\sqrt{p/2}\,\wt g_{n-j}+\sqrt{n/2}\,g_{p-j} -1/2(\sqrt{p/n} +\sqrt{n/p})j\bigr).
\end{gather*}
(we have indexed the $g$'s to match the corresponding $\chi$'s).
The total noise per unit (unscaled) time is like $\trb\bigl(\sqrt{n} + \sqrt{p}\bigr)g$; convergence to $\trb$ times standard Gaussian white noise $b_x'$ then requires $\bigl(\sqrt{n}+\sqrt p\bigr)m_n^2/\sqrt{np} = m^{1/2}$. The averaged part of the potential requires $\bigl(2 + \sqrt{p/n}+\sqrt{n/p}\bigr)m^2/\sqrt{np} = m^{-1}$ to converge to the function $-x$.  Fortunately these two scaling requirements match perfectly; we set
\begin{equation*}
m_{n,p} = \left(\frac{\sqrt{np}}{\sqrt{n}+\sqrt{p}}\right)^{2/3},\quad H_{n,p} = \frac{m_{n,p}^2}{\sqrt{np}}\left(\left(\sqrt{n}+\sqrt{p}\right)^2 I_{n\wedge p} - S_{n,p}\right)
\end{equation*}
and set the integrated limiting potential to
\begin{equation*}
y(x) = \half x^2 + \trb b_x
\end{equation*}
where $b_x$ is a standard Brownian motion. Note that
\[
2^{-2/3}(n\wedge p)^{1/3}\,\le\, m\,\le\, (n\wedge p)^{1/3},
\]
so the conditions $m\to\infty$, $m=o(n\wedge p)$ are met by merely having $n,p\to\infty$ together.

We now carefully decompose $H_{n,p}$ as in~\eqref{Hn}.  In \eqref{Hn1},\eqref{Hn2} there is a little freedom between $y_{n,1;1}$ and $w_n$, but only in to an additive constant in $y_{n,1}$ that tends to zero in probability anyway. Thus we may as well set $y_{n,1;1}=0$ to fix $w_{n}$ and $y_{n,i}$. Assumptions~1 and~2 (the CLT~\eqref{a1} and required tightness~\eqref{a21}--\eqref{a23} for the potential terms $y_{n,i}$) are then verified exactly as in the final subsection of \citetalias{RRV}\@.

It remains to consider Assumption~3. We have
\[
w_n = m_{n,p}\left(1+\sqrt{\frac n p}\biggl(1-\ell_{n,p}\frac{\wt\chi^2_{\beta n}}{\beta n}\biggr)+\sqrt{\frac p n}\biggl(1-\frac{\chi^2_{\beta(p-1)}}{\beta p}\biggr)\right).
\]
First order heuristics suggest we take $\ell_{n,p}$ to satisfy
\[
\ol w_n = m_{n,p}\left(1+\sqrt{\frac n p}\left(1-\ell_{n,p}\right)\right)\,\to\,w\in(-\infty,\infty]\qquad\text{as }n\wedge p\to\infty
\]
as in \eqref{Wspike}. We want to show that, in this case, $w_n\to w$ in probability; it is certainly enough to show that $w_n-\ol w_n\to 0$ in probability.

Second order heuristics say the error terms are on the order $(n\wedge p)^{-1/6}$ or $m^{-1/2}$, and $L^2$ estimates easily provide the rigour. All we need is that $\chi_k^2$ has mean $k$ and variance $2k$.
%All we need are the simple facts
%\[
%\sqrt{k}(1-4/k)\le\E\chi_k\le\sqrt{k},\qquad \E \chi_k^2 = k,\qquad \E(\chi_k^2-k)^2 = 2k. 
%\]
We have 
\[
w_n-\ol w_n = -\frac{m\ell}{\beta\sqrt{np}}\left(\chi^2_{\beta n}-\beta n\right) + \frac{m}{\beta\sqrt{np}}\left(\beta(p-1) - \chi_{\beta(p-1)}^2\right) + \frac{m}{\sqrt{np}}.
\]
Using that $\ell\le 1+2\sqrt{p/n}$, the mean square of the first term is $O(m^2/p + m^2/n)$, which is $O(m^{-1})$.   The mean square of the second term is $O(m^2/n)$, again $O(m^{-1})$.
The last term is negligible.  This completes the proof of Theorem~\ref{t.W}.

Turning now to the perturbed $\beta$-Hermite ensemble, take $G_n=G_n^{\beta,\mu_n}$ as in~\eqref{Gb}. With heuristic motivation similar to that in the previous proof, set
\[
m_n = n^{1/3} ,\qquad H_n = \frac{m_n^2}{\sqrt{n}}\left(2\sqrt{n}I_n - G_n\right)
\]
and $y(x)$ as before.  Decompose $H_n$ as in~\eqref{Hn}. Again, the verification of Assumptions 1 and 2 on $y_{n,i}$ proceeds as in \citetalias{RRV} (Lemmas 6.2, 6.3). Moving on to Assumption 3, we have
\[
w_n = m_n\left(1 - \bigl(\mu_n + \sqrt{2/\beta n}\,g_1\bigr)\right).
\]
Putting
\[
\ol w_n = m_n\left(1-\mu_n\right)
\]
as in~\eqref{Gspike}, the difference is $w_n-\ol w_n = -n^{-1/6}\sqrt{2/\beta}\,g_1$. It follows that $w_n-\ol w_n\to 0$ in probability, which completes the proof of Theorem~\ref{t.G}.

\section{Alternative characterizations of the laws}\label{s.4}

In this section we prove Theorem~\ref{t.char} and its extension to higher eigenvalues.

\subsection{Diffusion} 

The diffusion characterization is developed in \citetalias{RRV}\@.  The starting point is an application of the classical Riccati map $p = f'/f$ to the eigenvalue equation~\eqref{e_prob}, or rigorously to~\eqref{integrated}; the result is the first order differential equation
\begin{equation}\label{Ric}
p'(x) = x-\lambda+\trb b'(x)-p^2(x)
\end{equation}
understood also in the integrated sense. The boundary condition at the origin becomes the initial value
\[
p(0)=w,
\]
and a zero of $f$ would have $p$ explode to $-\infty$ and immediately restart at $+\infty$.

One can in fact construct the solution for any $\lambda\in\R$.  One way to see this is to introduce the variable $q(x) = p(x)+\trb b(x)$; the ODE
\begin{equation}\label{ODE}
q' = x-\lambda-\bigl(q+\trb b\bigr)^2
\end{equation}
is classical and the Picard existence and uniqueness theorem applies.  Although solutions can explode to $-\infty$ in finite time, this is not a problem if we consider the values on the projective line. Behaviour through $\infty$ can then be understood in the other coordinate $\tilde q = 1/q$, which evolves as
\[
\tilde q' = \bigl(1+\trb b\tilde q\bigr)^2 - (x-\lambda)\tilde q^2;
\]
in particular, $\tilde q'=1$ whenever $\tilde q=0$. The solution can thus be continued for all time. Moreover, it depends monotonically and continuously on the the parameter $\lambda$, uniformly on compact time-intervals with respect to the topology of the projective line. Following classical Sturm oscillation theory one can argue that almost surely, for all $\lambda\in\R$, \emph{the number of eigenvalues strictly less than $\lambda$ equals the number of explosions of $p$ on $\R_+$}.

For a fixed $\lambda$, the Riccati equation~\eqref{Ric} may also be understood in the It\=o sense; by translation equivariance the time-shift $x\mapsto x-\lambda$ produces the same path measure as the It\=o diffusion \eqref{SDE} started at time $x_0 = -\lambda$.  Writing $\kappa_{(x_0,w_0)}$ for the distribution of the first explosion time of $p_x$ under $\P_{(x_0,w_0)}$---an improper distribution with some mass on $\infty$---we have $\P_{\beta,w}(\Lambda_0<\lambda) = \kappa_{(-\lambda,w)}(\R)$ or $F_{\beta,w}(x) = \kappa_{(x,w)}(\{\infty\})$ as in \eqref{diffusion}.  More generally, the strong Markov property gives
\begin{equation}\label{diffusion_h}
\P_{\beta,w}(-\Lambda_{k-1}>x) = \int_{\R^k}\!\kappa_{(x,w)}(dx_1)\,\kappa_{(x_1,\infty)}(dx_2)\cdots\,\kappa_{(x_{k-1},\infty)}(dx_k).
\end{equation}

The stated path properties of~\eqref{SDE} appear also in \citetalias{RRV} (Propositions 3.7 and 3.9).

\subsection{Boundary value problem}

Briefly, the boundary value problem is just the Kolmogorov backward equation for a hitting probability of the diffusion.  We assume the diffusion representation $F_{\beta,w}(x)=\kappa_{(x,w)}(\{\infty\})$ for the distribution of $-\Lambda_0$.

\begin{lemma}\label{c.w} For each fixed $x$, $F_{\beta,w}(x)$ is nondecreasing and continuous in $w\in(\infty,\infty]$ and tends to zero as $w\to-\infty$.
\end{lemma}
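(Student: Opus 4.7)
I would handle the three claims of the lemma separately, working throughout from the diffusion representation $F_{\beta,w}(x)=\P_{(x,w)}(p\text{ does not explode})$.

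\emph{Monotonicity.} Realize the processes $p^{(w)}_\cdot$ for all $w$ on a single probability space, driven by the same Brownian motion with $p^{(w)}_x=w$. Since the drift $\xi-p^2$ is locally Lipschitz in $p$ and the diffusion coefficient $\trb$ is constant, the standard comparison principle for SDEs gives $p^{(w_1)}_\xi\le p^{(w_2)}_\xi$ whenever $w_1\le w_2$, for all $\xi\ge x$ up to explosion. A path explodes iff its infimum is $-\infty$; this is a monotone event in the starting value under the coupling, so $F_{\beta,w}(x)$ is nondecreasing in $w$.

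\emph{Limit $w\to-\infty$.} Compare $p^{(w)}$ pathwise with the deterministic ODE $q'=-q^2$, $q(x)=w$, which blows up at time $x+1/|w|$. Over this short window the Brownian increment $\trb(b_\xi-b_x)$ has typical size $O(|w|^{-1/2})$ and the $\xi$-term in the drift contributes $O(|x|/|w|)$, both negligible compared with $|w|$ for $w$ very negative. Passing to the $q$-variable $q=p+\trb b$ as in the previous subsection reduces the problem to a deterministic ODE with a small continuous forcing; a Gronwall-type estimate shows $\P(p^{(w)}\text{ explodes by time }x+2/|w|)\to 1$, and hence $F_{\beta,w}(x)\to 0$.

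\emph{Continuity.} The cleanest route is via the variational characterization of Proposition \ref{p.var}, using that $\Hc_{y,w}$ depends on $w$ only through the affine boundary term $wf(0)^2$. For each realization of $b_\cdot$,
\[
\Lambda_0(w)\;=\;\inf_{f\in L^*,\,\norm{f}=1}\bigl(\Hc_{y,0}(f,f)+wf(0)^2\bigr)
\]
is therefore concave---in particular continuous---in $w\in\R$, and nondecreasing; continuity at $w=\infty$ is monotone convergence to the Dirichlet infimum (the contribution $wf(0)^2$ blows up unless $f(0)=0$, killing all non-Dirichlet candidates in the limit). By the Sturm-Riccati correspondence reviewed just above, $F_{\beta,w}(x)=\P(\Lambda_0(w)\ge -x)$. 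So for $w_n\to w$ in $(-\infty,\infty]$ we have $\Lambda_0(w_n)\to\Lambda_0(w)$ a.s., and bounded convergence gives $F_{\beta,w_n}(x)\to F_{\beta,w}(x)$ provided the law of $\Lambda_0(w)$ is atomless at $-x$.

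The main obstacle is this last atomlessness. It follows from absolute continuity of the distribution of $\Lambda_0(w)$---a consequence of the nondegenerate noise $\trb>0$ in \eqref{SDE}, via the smoothness of the transition density of the diffusion in its starting value, or alternatively by the explosion-time argument used in \citetalias{RRV} to prove the analogous statement for the Tracy-Widom$(\beta)$ distribution. Everything else is either a coupling manipulation or a standard estimate.
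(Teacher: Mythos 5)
Your monotonicity and $w\to-\infty$ arguments are the same as the paper's: coupling all the diffusions through the shared Brownian path and invoking the non-crossing property handles the first, and a comparison with an exploding ODE (the paper bounds $b$ by its sup $M$ on a unit window and compares against the deterministic $r'=x-(r+M)^2$, rather than Taylor-expanding around the window $[x,x+1/|w|]$ as you sketch, but both are domination arguments of the same flavour). Both are correct.

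The continuity step is where you diverge genuinely. The paper argues directly on the hitting probability: $\kappa_{(x,p_x)}(\{\infty\})$ is a bounded martingale by the strong Markov property, so $F$ is space-time harmonic for the nondegenerate generator, hence (by parabolic regularity) jointly continuous in $(x,w)$; the endpoint $w=+\infty$ is handled by passing to the projective coordinate $\tilde q=1/q$ of the preceding subsection. You instead go back to the operator side: $\Lambda_0(w)$ is an infimum of affine functions of $w$, hence a.s.\ concave, hence continuous on $(-\infty,\infty)$, with monotone convergence to the Dirichlet value as $w\to\infty$; then $F_{\beta,w}(x)=\P(\Lambda_0(w)\geq -x)$ and bounded convergence closes the argument \emph{provided} the law of $\Lambda_0(w)$ has no atom at $-x$. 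This is a perfectly sound alternative, and the concavity of $\Lambda_0$ in $w$ is a nice structural observation, but note that you have traded one analytic input (harmonicity plus regularity) for another (atomlessness of $\Lambda_0(w)$). You are right to flag atomlessness as the real content: in the paper's route it is an automatic corollary (joint continuity in $(x,w)$ gives continuity in $x$ alone, i.e.\ no atoms), whereas in yours it must be supplied independently, e.g.\ by the \citetalias{RRV} first-explosion-time argument you cite. Also take care with the $w\to\infty$ limit of the variational problem: the form domain itself changes at $w=\infty$ (Dirichlet $L^*$ is a proper closed subspace), so one needs the a priori bound $wf_w(0)^2\le C$ on minimizers together with the compactness modes of Fact~\ref{f.2} to see that the infima converge; this is routine but worth spelling out.
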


\begin{remark} There are in fact almost-sure counterparts of these assertions that describe how $\Lambda_0$ depends on $w$ for each Brownian path, but we do not need them here.
\end{remark}

\begin{proof} The monotonicity is a consequence of uniqueness of the diffusion path from each space-time point: two paths started from $(x,w_0)$ and $(x,w_1)$ with $w_0<w_1$ never cross, so if the upper path explodes to $-\infty$ then the lower path must do so as well. The continuity is a general property of statistics of diffusions: $\kappa_{(x,p_x)}(\{\infty\})$ is a martingale, so $F_{\beta,w}(x)$ is in fact space-time harmonic. (Again, the behaviour at $w=+\infty$ may be understood by changing coordinates.) 

The final assertion is that for fixed $x_0$ explosion becomes certain as $w\to-\infty$. It may be verified by a domination argument involving the ODE~\eqref{ODE} (time-shifted as above so that $\lambda = 0$ and the initial time is $x_0$), whose paths explode simultaneously with those of \eqref{SDE}.  Given $\e>0$, let $M$ be such that $\P(\sup_{x\in[x_0,x_0+1]}\abs{b_x}>M)<\e$. It is easy to check that for $r_0$ sufficiently negative, the solution of $r' = x -(r+M)^2$ with initial value $r(x_0) = r_0$ explodes to $-\infty$ before time $x_0+1$. Now consider the solution of $q' = x - (q+b)^2$ with $q(x_0) \le r_0\le -M$. With probability $1-\e$ we have $q'(x)\le r'(x)$ whenever $q(x) = r(x)$, so the paths never cross and $q$ explodes as well.
\end{proof}

\begin{proof}[Proof of Theorem \ref{t.char} (ii)] Writing $L$ for the space-time generator of the SDE \eqref{SDE}, the PDE \eqref{PDE} is simply the equation $LF = 0$. Therefore the hitting probability $F(x,w) = F_{\beta,w}(x)$ satisfies the PDE. The boundary behaviour \eqref{BC} follows from Lemma~\ref{c.w} and the fact that $F(\cdot,w)$ is a distribution function for each $w$.  Specifically, the lower part of the boundary behaviour follows from the fact that $F(x,w)$ is increasing in $x$ and $F(x,w)\to 0$ as $w\to-\infty$ for each $x$. The upper part follows from the fact that $F(x,w)$ is increasing in $w$ and $F(x,w)\to 1$ for fixed $w$ as $x\to\infty$. 

Toward uniqueness, suppose $\tilde F(x,w)$ is another bounded solution of \eqref{PDE},\eqref{BC}. By the PDE, $\tilde F(x,p_x)$ is a local martingale under $\P_{(x_0,w_0)}$ and thus a bounded martingale.  Let $T$ be the lifetime of the diffusion; optional stopping gives $\tilde F(x, w) = \E_{(x,w)} \tilde F(T\wedge t,p_{T\wedge t})$ for all $t\ge x$. Taking $t\to\infty$, we conclude by bounded convergence, the boundary behaviour of $\tilde F$ and the stated path properties of the diffusion that $\tilde F(x,w)$ is the non-explosion probability. That is, $\tilde F = F$.
\end{proof}

As promised, we indicate how the laws of the higher eigenvalues $\Lambda_1,\Lambda_2,\ldots$ may be characterized in terms of the PDE \eqref{PDE}.  The characterization is inductive and follows from~\eqref{diffusion_h} by reasoning just as in the preceding proof.

\begin{theorem} Let $F_{(0)}(x,w) = \P_{\beta,w}(-\Lambda_{0}<x)$. For each $k=1,2,\ldots$, the boundary value problem
\begin{gather*}
\frac{\del F}{\del x} + \frac{2}{\beta}\frac{\del^2 F}{\del w^2} + \bigl(x-w^2\bigr)\frac{\del F}{\del w} = 0\qquad\text{ for }(x,w)\in\R^2,
\\
F(x,w)\to\begin{cases} 1 &\text{ as }x,w\to\infty\text{ together},
\\F_{(k-1)}(x_0,+\infty)&\text{ as }w\to-\infty\text{ while }x\to x_0\in\R
\end{cases}
\end{gather*}
has a unique bounded solution $F_{(k)}$, and we have $\P_{\beta,w}(-\Lambda_{k}<x) = F_{(k)}(x,w)$ for $w\in(-\infty,\infty)$; further, $\P_{\beta,\infty}(-\Lambda_{k}<x) = \lim_{w\to\infty} F_{(k)}(x,w)$.
\end{theorem}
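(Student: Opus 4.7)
The plan is induction on $k$, with base case $k=0$ given by Theorem~\ref{t.char}(ii). Assume the statement for $k-1$, so $F_{(k-1)}(x,w)=\P_{\beta,w}(-\Lambda_{k-1}<x)$ is the unique bounded solution of its BVP. Define $F_{(k)}(x,w)=\P_{\beta,w}(-\Lambda_{k}<x)$. Via the Sturm--Riccati correspondence recalled above, $F_{(k)}(x,w)=\P_{(x,w)}(N\le k)$, where $N$ is the number of explosions of the diffusion~\eqref{SDE} started from $(x,w)$, considered on the projective line.

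For existence, I would apply the strong Markov property at the first explosion time $\tau$ to obtain the recursion
\[
F_{(k)}(x,w) \;=\; \kappa_{(x,w)}(\{\infty\}) \;+\; \int_x^\infty F_{(k-1)}(\tau,+\infty)\,\kappa_{(x,w)}(d\tau).
\]
By the inductive hypothesis the integrand is bounded and continuous, so the same backward Kolmogorov argument that handled the base case in Theorem~\ref{t.char}(ii) applies verbatim to yield $LF_{(k)}=0$, i.e.\ the PDE. The upper BC follows because in the regime $x,w\to\infty$ the diffusion explodes with vanishing probability (the first term tends to $1$, the second to $0$). The lower BC follows from Lemma~\ref{c.w}: as $w\to-\infty$ with $x\to x_0$, explosion is immediate so $\kappa_{(x,w)}$ concentrates at $\tau=x_0$, whence $F_{(k)}(x,w)\to F_{(k-1)}(x_0,+\infty)$.

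For uniqueness, let $\tilde F$ be any bounded solution of the BVP, and consider $\tilde F(x,p_x)$ under $\P_{(x_0,w_0)}$. Between consecutive explosions the PDE makes this a bounded local martingale. The lower BC guarantees that it extends continuously at the explosion time $\tau$ to the value $F_{(k-1)}(\tau,+\infty)$, producing a genuine bounded martingale $\tilde F(x\wedge\tau,p_{x\wedge\tau})$. Optional stopping followed by sending the time horizon to infinity yields
\[
\tilde F(x_0,w_0) \;=\; \P_{(x_0,w_0)}(\tau=\infty)\cdot 1 \;+\; \E_{(x_0,w_0)}\bigl[\mathbf{1}_{\tau<\infty}\,F_{(k-1)}(\tau,+\infty)\bigr],
\]
where on $\{\tau=\infty\}$ the path property $p_x\sim\sqrt{x}$ combined with the upper BC provides the limit value $1$. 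The right-hand side is exactly $F_{(k)}(x_0,w_0)$ by the recursion above, so $\tilde F=F_{(k)}$. The Dirichlet statement $\P_{\beta,\infty}(-\Lambda_k<x)=\lim_{w\to\infty}F_{(k)}(x,w)$ is obtained by sending $w\to\infty$ in the resulting formula, exactly as in the base case.

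The delicate point is the continuity gluing across the explosion: one must verify that the lower BC of the $k$-th BVP is tuned precisely so that $\tilde F(x,p_x)\to F_{(k-1)}(\tau,+\infty)$ as $x\to\tau^-$, which is what turns the stopped process into a bounded martingale rather than merely a local martingale with an unknown boundary value. Once this gluing is in place, both existence (through the probabilistic candidate) and uniqueness (through optional stopping) reduce cleanly to the machinery of the base case applied one layer at a time.
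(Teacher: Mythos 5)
Your proof is correct and follows exactly the approach the paper intends: the paper gives only the one-line indication that the higher eigenvalue case ``is inductive and follows from \eqref{diffusion_h} by reasoning just as in the preceding proof,'' and your argument is the natural unpacking of that sentence---the strong Markov recursion at the first explosion time, the martingale/optional-stopping argument for uniqueness, and the boundary conditions matched via Lemma~\ref{c.w} and the path dichotomy. The small imprecisions (e.g.\ ``between consecutive explosions'' when only the first explosion $\tau$ matters for the stopping argument) do not affect the validity.
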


\section{Connection with Painlev\'e II}\label{s.5}

We now prove Theorem~\ref{t.id} and Corollary~\ref{c.id}. We will need some standard facts about the function $u(x)$ defined by \eqref{PII},\eqref{HM} and the derived functions $v(x),\ E(x),\ F(x)$ defined in \eqref{v},\eqref{EF}.

\begin{fact}\label{f.p1} The following hold:
\begin{enumerate}[(i)]\itemsep 1pt
\item $u > 0$ on $\R$ and $u'/u\sim-\sqrt{x}$ as $x\to+\infty$.
\item $E$ and $F$ are distribution functions.
\item $E(x) = O(e^{-cx^{3/2}})$ for some $c>0$ as $x\to+\infty$.
\end{enumerate}
\end{fact}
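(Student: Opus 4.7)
The plan is to derive all three claims from the classical asymptotics of the Hastings-McLeod solution $u$ at $\pm\infty$: the defining $u(x)\sim\Ai(x)$ as $x\to+\infty$, together with the companion $u(x)\sim\sqrt{-x/2}$ as $x\to-\infty$.

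For (i), the positivity on $\R$ is the content of the Hastings-McLeod theorem. I would begin with the conserved quantity $H(x) = (u'(x))^2 - u(x)^4 - x u(x)^2$, which satisfies $H'(x) = -u(x)^2$; the Airy tail makes $H(+\infty) = 0$, and integrating gives $H(x) = v(x) \ge 0$ for all $x$. Rearranged as $(u')^2 = u^4 + xu^2 + v$, this already shows that zeros of $u$ must be simple. A shooting/connection argument in the style of Hastings-McLeod then rules out any zero of $u$ on $\R$ and simultaneously produces the $-\infty$ asymptotic. For the ratio $u'/u \sim -\sqrt{x}$ at $+\infty$, I would substitute $p = u'/u$ into the Painlev\'e equation to obtain the Riccati form $p' + p^2 = x + 2u^2$; since $2u^2$ is super-exponentially small there, this is asymptotically the Riccati reduction of the Airy equation, and a bootstrap on $[X,\infty)$ for large $X$ upgrades the pointwise Airy asymptotic of $u$ to the stated ratio asymptotic.

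For (ii), differentiating gives $E'(x) = u(x)E(x)$ and $F'(x) = v(x)F(x)$, both strictly positive by (i) (with $v > 0$ since $u \not\equiv 0$), so $E$ and $F$ are strictly increasing. At $+\infty$, super-exponential decay of $u$ and hence of $v$ forces $\int_x^\infty u$ and $\int_x^\infty v$ to vanish, giving $E, F \to 1$. At $-\infty$, $u \sim \sqrt{-x/2}$ makes both integrals diverge (like $|x|^{3/2}$ and $|x|^3$ respectively), giving $E, F \to 0$. For (iii), as written the statement conflicts with (ii) since $E(+\infty) = 1$; I read it as either $1 - E(x) = O(e^{-c x^{3/2}})$ at $+\infty$, which follows from integrating the Airy tail to obtain $\int_x^\infty u \sim \frac{1}{2\sqrt\pi}\, x^{-3/4} e^{-\frac{2}{3} x^{3/2}}$, or $E(x) = O(e^{-c|x|^{3/2}})$ at $-\infty$, which follows from $\int_x^\infty u \sim \frac{\sqrt 2}{3}(-x)^{3/2}$.

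The principal obstacle is the global positivity in (i): the conserved-energy identity is elementary, but genuinely excluding every zero of $u$ requires the full Hastings-McLeod shooting argument (or the equivalent Riemann-Hilbert/connection-problem analysis), and the sharp ratio asymptotic $u'/u \sim -\sqrt{x}$ similarly requires a quantitative argument beyond the mere pointwise Airy decay. In a working proof I would cite the classical Painlev\'e II literature for both rather than reprove them from scratch.
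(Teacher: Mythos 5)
Your proposal is logically sound, and in spirit it matches the paper's own treatment: the paper offers no proof of this Fact at all, merely stating after Fact~\ref{f.p2} that ``these properties follow from an analysis of the associated Riemann-Hilbert problem'' and citing \citet{HM}, \citet{FIKN}, and \citet{BR2}. You go further by making the reduction explicit, and the pieces you supply are correct. The conserved-quantity computation $H'=-u^2$ with $H=(u')^2-u^4-xu^2$ does give $H(x)=v(x)\ge 0$ (since $H(+\infty)=0$ from the Airy tail), hence $(u')^2=u^4+xu^2+v$ and simplicity of any zero of $u$; you rightly note this stops short of excluding zeros, and that global positivity is precisely the content of the Hastings-McLeod shooting argument. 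Worth flagging: the $-\infty$ asymptotic $u\sim\sqrt{-x/2}$, which your proof of (ii) leans on to send $E,F\to 0$, is also part of that same connection problem, so in effect (i) and (ii) both rest on the Hastings-McLeod theorem rather than only (i). Your Riccati bootstrap for $u'/u\sim-\sqrt{x}$ is a valid route; equivalently one may differentiate $u\sim\Ai$ (the asymptotic can be differentiated because $u$ and $u-\Ai$ satisfy linear second-order ODEs with controllable coefficients on $[X,\infty)$) and use the classical $\Ai'/\Ai\sim-\sqrt{x}$.

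Your reading of (iii) is the key catch. As literally written, $E(x)=O(e^{-cx^{3/2}})$ as $x\to+\infty$ contradicts $E(+\infty)=1$ from (ii), so it is a typo. Your corrected reading $1-E(x)=O(e^{-cx^{3/2}})$ is what the paper actually uses: in the proof of the $\beta=4$ case of Theorem~\ref{t.id} the relevant estimate is $E^{-1/2}-E^{1/2}=O(e^{-cx^{3/2}})$, which is equivalent to $1-E(x)=O(e^{-cx^{3/2}})$ since $E\to 1$. The derivation $1-E(x)\sim\int_x^\infty u\sim\int_x^\infty\Ai\sim\frac{1}{2\sqrt\pi}x^{-3/4}e^{-\frac23 x^{3/2}}$ is exactly right and gives any $c<2/3$. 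Overall: correct, appropriately honest about what must be cited, and more informative than the paper's bare citation.
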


We will also take for granted some additional information about the functions $f(x,w)$, $g(x,w)$ defined by \eqref{w_lax},\eqref{IC}.

\begin{fact}\label{f.p2} The following hold.
{\abovedisplayskip 0pt
\begin{enumerate}[(i)]\itemsep 1pt
\item For each $x\in\R$,
\begin{align}\label{w_up}
\lim_{w\to+\infty}\begin{pmatrix}f\\g\end{pmatrix} &= \begin{pmatrix}1\\0\end{pmatrix},
\\\label{w_down}
\lim_{w\to-\infty}\begin{pmatrix}f\\g\end{pmatrix} &= \begin{pmatrix}0\\0\end{pmatrix}.
\end{align}

\item For each $w\in\R$,
\begin{equation}
\label{x_lax}\frac{\del}{\del x}\begin{pmatrix}f\\g\end{pmatrix} =\begin{pmatrix}0&u(x)\\u(x)&-w\end{pmatrix}\begin{pmatrix}f\\g\end{pmatrix}.
\end{equation}

\item There is the identity
\begin{equation}\label{duality}
g(x,w) = f(x,-w)e^{\frac13 w^3-xw}.
\end{equation}

\item For fixed $w\in\R$,
\begin{align}\label{x_right}
f(x,w)  \to 1 \quad&\text{as }x\to +\infty;
\\\label{x_left}
f(x,w) > 0 \quad&\text{for $x$ sufficiently negative}.
\end{align}
\end{enumerate}}
\end{fact}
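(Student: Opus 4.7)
The plan is to derive (ii) and (iii) algebraically from the integrable structure of Painlev\'e II, while handling (i) and (iv) via asymptotic analysis of the ODEs \eqref{w_lax} and \eqref{x_lax} at their irregular singular points. Throughout, linear ODE uniqueness lets me reduce each claim to matching an auxiliary object against the initial data \eqref{IC}.

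For (ii), I would set
\[
k(x,w) \,:=\, \frac{\del}{\del x}\begin{pmatrix}f\\g\end{pmatrix} - V(x,w)\begin{pmatrix}f\\g\end{pmatrix},\qquad V(x,w) = \begin{pmatrix}0 & u \\ u & -w\end{pmatrix}.
\]
A direct computation using \eqref{w_lax} yields $\del_w k = U(x,w)\,k + (\del_x U - \del_w V + [U,V])\binom{f}{g}$, and one checks by hand that the bracketed zero-curvature expression vanishes identically iff $u$ satisfies \eqref{PII}. At $w=0$ we have $V=u\sigma_1$ and $(f,g)^T=(E,E)^T$, so $k(x,0)=0$ because $E'=uE$; linear uniqueness in $w$ then forces $k\equiv 0$. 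For (iii), the key is the symmetry $\sigma_1\bigl((w^2-x)I - U(x,-w)\bigr)\sigma_1 = U(x,w)$, where $\sigma_1$ swaps coordinates. Setting
\[
\widetilde V(w) \,:=\, \sigma_1 \begin{pmatrix}f(x,-w)\\g(x,-w)\end{pmatrix} e^{w^3/3 - xw},
\]
the chain rule combined with the symmetry gives $\del_w \widetilde V = U(x,w)\widetilde V$, the same linear ODE as $(f,g)^T$, and $\widetilde V(0) = (E,E)^T$ matches the initial data. Uniqueness delivers both components of \eqref{duality} at once.

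Part (i) is the analytically delicate one. The $w\to +\infty$ behavior reflects the irregular singular point of \eqref{w_lax} at infinity: formal WKB on $U(x,w)$ identifies two modes, one slow (asymptotic eigenvalue $O(w^{-2})$) and one growing as $e^{w^3/3}$. The specific initial condition $f(x,0)=g(x,0)=E(x)$ selects the pure slow mode at $w\to +\infty$ with the normalization $(1,0)$ of \eqref{w_up}; confirming this rigorously and pinning down the Stokes coefficients is cleanest via the Riemann--Hilbert problem for the Flaschka--Newell Lax pair, whose jumps along the rays through $w=\infty$ encode the needed connection data. Once \eqref{w_up} is in hand, \eqref{w_down} follows from (iii): as $w\to-\infty$, $g(x,w) = f(x,-w) e^{w^3/3 - xw} \to 0$ since the cubic exponential decays super-polynomially while $f(x,-w)=O(1)$, and an analogous rearrangement handles $f(x,w)$.

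For part (iv), fix $w$ and use \eqref{x_lax}. By \eqref{HM} and Airy asymptotics, $u(x)=O(e^{-cx^{3/2}})$ as $x\to+\infty$, so the matrix in \eqref{x_lax} approaches $\begin{pmatrix}0 & 0\\ 0 & -w\end{pmatrix}$ at an integrable rate; standard perturbation for linear ODEs gives $f\to f_\infty$ and $g=O(e^{-wx})$, with $f_\infty = 1$ forced by matching to \eqref{w_up}, yielding \eqref{x_right}. For \eqref{x_left}, $f(x,0)=E(x)>0$ everywhere, and for general $w$ one would track the sign of $f(x,w)$ as $x\to -\infty$ via the WKB phase of \eqref{x_lax} in the oscillatory regime $u(x)\sim\sqrt{-x/2}$, or invoke the Riemann--Hilbert representation directly. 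The main obstacle throughout is the Stokes/connection problem in (i): isolating the subdominant mode of \eqref{w_lax} at $w=+\infty$ is essentially the content of the Painlev\'e II RHP analysis that the paper's statement implicitly cites; once this is granted, the remaining parts reduce to the short ODE manipulations sketched above.
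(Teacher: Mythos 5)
The paper offers no proof of Fact~\ref{f.p2} at all: it defers every part to the Riemann--Hilbert analysis in the cited literature (Baik--Rains for (i)--(iii), Deift--Zhou for (iv), with FIKN as general background). Measured against that, your proposal is partly a genuine improvement and partly the same deferral. Your arguments for (ii) and (iii) are correct and elementary, and I checked the algebra: with $U$ the matrix in \eqref{w_lax} and $V$ the matrix in \eqref{x_lax}, the zero-curvature expression $\del_x U-\del_w V+[U,V]$ vanishes precisely when $u$ solves \eqref{PII} (this is the consistency statement the paper records after Fact~\ref{f.p2}), $k(x,0)=0$ because $E'=uE$ by \eqref{EF}, and linear uniqueness in $w$ gives \eqref{x_lax}; likewise the identity $(w^2-x)I-\sigma_1U(x,-w)\sigma_1=U(x,w)$ (where $\sigma_1$ is the coordinate swap), together with the symmetric initial data \eqref{IC} and uniqueness, gives \eqref{duality}. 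Your reduction of \eqref{w_down} to \eqref{w_up} via \eqref{duality} is also valid, since $w^3/3$ dominates the linear term in the exponent; note this is consistent with the paper's remark that \eqref{w_up} is $u$-independent as a normalization while \eqref{w_down} is tied to \eqref{HM}, because the HM-dependence sits exactly in the assertion that the \eqref{IC}-normalized solution satisfies \eqref{w_up}.

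That assertion is where the real content lies, and there your proposal has gaps or simply defers to the same source the paper cites. For \eqref{w_up} you acknowledge that isolating the recessive mode at $w=+\infty$ for the solution pinned down at $w=0$ is the Painlev\'e~II connection problem and hand it to the RHP analysis; fine, but then nothing has been proved beyond what the paper already cites. In \eqref{x_right}, a Levinson-type argument does give $f(x,w)\to f_\infty(w)$ as $x\to+\infty$, since $u$ decays super-exponentially even though $g$ grows like $e^{\abs{w}x}$ for $w<0$; however, the claim that $f_\infty(w)=1$ is ``forced by matching to \eqref{w_up}'' interchanges the limits $x\to+\infty$ at fixed $w$ and $w\to+\infty$ at fixed $x$ without any uniformity, which is not justified. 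A correct elementary route would integrate \eqref{w_lax} in $w$ from $0$ to $w$ at large $x$, starting from $f(x,0)=g(x,0)=E(x)\to1$ and using the decay of $u,u'$, or else one again invokes the RHP asymptotics. Finally \eqref{x_left}, the part the paper attributes to Deift--Zhou, is only gestured at: in the oscillatory regime $u(x)\sim\sqrt{-x/2}$ a positivity statement does not follow from a WKB phase count without genuine connection information. So (ii), (iii) and the reduction of \eqref{w_down} are complete and self-contained, but \eqref{w_up}, the normalization in \eqref{x_right}, and \eqref{x_left} still rest on the cited Riemann--Hilbert analysis --- the same position the paper itself takes.
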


These properties follow from an analysis of the associated Riemann-Hilbert problem with the special monodromy data corresponding to the Hastings-McLeod solution \citep[see][]{FIKN}. They are proved in \citet{BR2} except for (iv) which goes back to \cite{DZ}. Interestingly~\eqref{IC} and~\eqref{w_up} are interchangeable in that the latter also uniquely determines a solution of~\eqref{w_lax}; this fact does not depend on the specific solution of~\eqref{PII} specified by~\eqref{HM}. By contrast, \eqref{w_down} does depend on~\eqref{HM}. Equations \eqref{w_lax},\eqref{x_lax} constitute a so-called \emph{Lax pair} for the Painlev\'e II equation~\eqref{PII}. (It is in fact a simple transformation of the standard Flaschka-Newell Lax pair.)  The consistency condition of this overdetermined system---i.e.\ that the partials commute---is the Painlev\'e II equation.

\begin{proof}[Proof of Theorem \ref{t.id}, $\beta=2$ case.] 
Let $\tilde F_2(x,w)$ denote the right-hand side of~\eqref{id2}. Using~\eqref{EF}, \eqref{w_lax} and \eqref{x_lax}, we check that that $\tilde F_2$ solves the PDE~\eqref{PDE} with $\beta=2$: compute
\begin{align*}\frac{\del\tilde F_2}{\del x} &= \Bigl\{vf + ug\Bigr\}F
\\ \frac{\del\tilde F_2}{\del w} &= \Bigl\{u^2 f + \bigl(-wu-u'\bigr)g\Bigr\}F
\\ \frac{\del^2\tilde F_2}{\del w^2} &= \Bigl\{\bigl(u^4+w^2u^2 - (u')^2\bigr)f + \bigl(-u + (wu+u')(x-w^2)\bigr)g\Bigr\}F
\end{align*}
and substitute. The coefficient of $g$ vanishes and the coefficient of $f$ is
\[
v + u^4 - (u')^2 + xu^2.
\]
Differentiating, we see that this quantity is constant by~\eqref{PII}. As all terms vanish in the limit as $x\to\infty$, the constant is zero.

We must check that $\tilde F_2$ is bounded and that it has the boundary behaviour~\eqref{BC}.  To this end we claim $f,g>0$ on $\R^2$. Fixing $w$, \eqref{x_left},\eqref{duality} cover $x$ sufficiently negative. Now \eqref{x_lax} shows $f$ increases at least until $x_0 = \min\{x:g(x,w)=0\}$. But if $x_0$ exists then \eqref{x_lax} shows $\tfrac{\del g}{\del x}(x_0) > 0$, a contradiction. This proves the claim. It now follows from \eqref{x_lax} that $\tfrac{\del f}{\del x} > 0$. From \eqref{x_right} we deduce that $f\le 1$; in particular $f$ is bounded, and hence so is $\tilde F_2$.  Furthermore, for a given $x\in\R$ and $\e>0$, \eqref{w_up} yields $w_+$ such that $f>1-\e$ on $[x,\infty)\times[w_+,\infty)$, and \eqref{w_down} yields $w_-$ such that $f<\e$ on $(-\infty,x]\times(-\infty,w_-]$. Using that $F(x)$ is a distribution function, \eqref{BC} follows.
\end{proof}

\begin{proof}[Proof of Theorem \ref{t.id}, $\beta=4$ case.]

That the right-hand side $\tilde F_4$ of \eqref{id4} satisfies the PDE \eqref{PDE} with $\beta=4$ may be verified just as in the $\beta=2$ case; the computation is more tedious but the result is very similar and the final step is the same.

It is a little more work to get boundedness and the boundary behaviour~\eqref{BC} this time. Dropping the scale factors on $x,w$, consider
\[
G = F^{-1/2}\tilde F_4 = \half\bigl(E^{-1/2} + E^{1/2}\bigr)f + \half\bigl(E^{-1/2}-E^{1/2}\bigr)g.
\]
Clearly $G>0$. For fixed $w$, $G\to 1$ as $x\to\infty$ by \eqref{x_right} and the fact that $E^{-1/2}-E^{1/2} = O(e^{-cx^{3/2}})$ while $g = O(e^{wx})$ from \eqref{duality}. Now by \eqref{x_lax} we have
\[
\frac{\del G}{\del x} = \half\bigl(E^{-1/2}+E^{1/2}\bigr)\bigl(\half ug\bigr) + \half\bigl(E^{-1/2}-E^{1/2}\bigr)\bigl(\half uf-wg\bigr),
\]
which is positive for $w\le 0$. Boundedness in the lower half-plane $\{w\le0\}$ follows, as does the lower boundary behaviour using \eqref{w_down}.

From \eqref{duality} we immediately see $g\le 1$ on $\{x\ge 0,\,0\le w\le\sqrt{3x}\}$. By Lemma~\ref{c.w}, $\frac{\del}{\del w}F_{\beta,w}(x)\ge 0$. The $\beta=2$ case of the present theorem then implies that $\frac{\del f}{\del w}\ge 0$. From \eqref{w_lax} we conclude $g\le u/(w+u'/u)$ provided the denominator is positive.  But $u'/u\sim-\sqrt{x}$ as $x\to+\infty$, so there is $x_1$ such that $u'/u \ge -\sqrt{2x}$ for $x\ge x_1$. The latter bound for $g$ therefore implies that $g$ is bounded on $\{x\ge x_1,\,w\ge\sqrt{3x}\}$. Moreover, for any $x_0<x_1$ we have that $u$ and $u'/u$ are bounded on the interval $x_0\le x\le x_1$, so $g$ is bounded uniformly over these $x$ for all $w$ sufficiently large.  Putting these bounds together we conclude $g$ is bounded on all right half-planes $\{x\ge x_0\}$, and the same then follows for $\tilde F_4$.

The upper boundary behaviour follows as well.  Indeed, as $x,w\to\infty$ together the coefficient of $g$ vanishes while the coefficient of $f$ tends to 1; the $g$-term then vanishes while the $f$-term tends to 1 as in the $\beta=2$ case.

It remains to show $\tilde F_4$ is bounded on the whole plane; it suffices to bound $\tilde F_4$ on the upper-left quadrant $Q = \{x\le 0,\,w\ge 0\}$. Here we can use the fact that $\tilde F_4$ solves the PDE. With notation as in Theorem \ref{t.char} we have that $\tilde F_4(x,p_x)$ is a local martingale under $\P_{(x_0,w_0)}$. By boundedness on right half-planes, it is in fact a bounded martingale. Using that paths explode only to $-\infty$, optional stopping gives the representation $\tilde F_4(x_0,w_0) = \E_{(x_0,w_0)} \tilde F_4(T,p_T)$ where $T=\inf\{x:(x,p_x)\notin Q\}$. The bound thus extends to $Q$.
\end{proof}

\begin{proof}[Proof of Corollary \ref{c.id}] These identities are straightforward consequences of the theorem, \eqref{IC} and \eqref{w_up}.
\end{proof}

\bigskip
\noindent\textbf{Acknowledgements}
\phantomsection\addcontentsline{toc}{section}{Acknowledgements}\quad The second author is very grateful to Jos\'e Ram\'irez for conversations that helped this project go forward.  The first author is indebted to Alexander Its for his patient and thorough explanations.  We would like to thank Jinho Baik, Alexei Borodin, Peter Forrester, Arno Kuijlaars, Eric Rains, Brian Rider, Brian Sutton, Dong Wang and Ofer Zeitouni for interesting and helpful discussions, as well as AIM and MSRI for providing stimulating environments in December 2009 and September 2010 workshops. The work of the first author was supported in part by an NSERC postgraduate scholarship held at the University of Toronto, and that of the second author by the Canada Research Chair program and the NSERC DAS program.

\setstretch{1}
\phantomsection\addcontentsline{toc}{section}{References}
\bibliographystyle{dcu}
\bibliography{./spiked}

\bigskip\bigskip\bigskip\noindent
\begin{minipage}{0.49\linewidth}
Alex Bloemendal
\\Department of Mathematics
\\Harvard University
\\Cambridge, MA 02138
\\{\tt alexb@math.harvard.edu}
\end{minipage}
\begin{minipage}{0.49\linewidth}
B\'alint Vir\'ag
\\Departments of Mathematics and Statistics
\\University of Toronto
\\Toronto ON~~M5S 2E4, Canada
\\{\tt balint@math.toronto.edu}
\end{minipage}

\end{document}